\newcommand*{\defeq}{\mathrel{\vcenter{\baselineskip0.5ex \lineskiplimit0pt
                     \hbox{\scriptsize.}\hbox{\scriptsize.}}}%
                     =}
\def\Cx{\mathbb{C}}
\def\one{\mathbf{1}}
\newcommand{\ip}[2]{\left\langle #1, #2 \right\rangle}
\newcommand{\norm}[1]{\left\| #1 \right\|}
\newcommand{\fl}[1]{\left\lfloor #1 \right\rfloor}
\newcommand{\DD}{\mathbb{D}}
\theoremstyle{plain}
\newtheorem{thm}{Theorem}[section]
\newtheorem{lemma}[thm]{Lemma}
\newtheorem{cor}[thm]{Corollary}
\newtheorem{prop}[thm]{Proposition}
\newtheorem{thmout}{Theorem}
\theoremstyle{definition}
\newtheorem{defi}[thm]{Definition}
\title{Mixing and ergodicity of compositions of inner functions}
\author{Gustavo R. Ferreira}
\address{Centre de Recerca Matem\`atica, Barcelona, Spain}
\email{grodrigues@crm.cat}
\author{Artur Nicolau}
\address{Departament de Matem\`atiques, Universitat Aut\`onoma de Barcelona, and Centre de Recerca Matem\`atica, 08193, Barcelona, Spain}
\email{artur.nicolau@uab.cat}
\thanks{The first author thanks Phil Rippon and Gwyneth Stallard for many fruitful discussions, and especially for pointing out reference \cite{BS01}. The second author is supported in part by the Generalitat de Catalunya (grant 2021 SGR 00071) and the Spanish Ministerio de Ciencia e Innovaci\'on (project  PID2021-123151NB-I00). Both authors acknowledge financial support from the Spanish Research Agency through the Mar\'ia de Maeztu Program (CEX2020-001084-M)}
\date{\today}
\begin{document}
\begin{abstract}
We study ergodic and mixing properties of non-autonomous dynamics on the unit circle generated by inner functions fixing the origin. 
\end{abstract}
\maketitle

\section{Introduction}\label{sec:intro}
Let $\DD$ be the open unit disc of the complex plane and let $g\colon\DD\to\DD$ be an analytic mapping with $g(0)=0$ which is not a rotation. On one hand, the classical Denjoy-Wolff Theorem tells us that the iterates $g^n = g \circ \ldots \circ g$ converge to $0$ uniformly on compact subsets of $\DD$ (see e.g. \cite[p. 79]{CG93}). On the other hand, let $m$ denote normalised Lebesgue measure on the unit circle $\partial \DD$. An analytic self-mapping $g$ of $\DD$ is called \emph{inner} if
\[ 
\hat g(e^{i\theta}) \defeq \lim_{r\nearrow 1} g(re^{i\theta}) 
\]
exists and has modulus one for $m$-almost every point $e^{i\theta}\in\partial\DD$. In this case, one can investigate the dynamics of the measurable boundary self-map $\hat g\colon\partial\DD\to\partial\DD$, which is actually defined at almost every point of $\partial \DD$. If $g$ is an inner function fixing the origin, Lowner's Lemma tells us that $m$ is invariant under $\hat g$, that is, $m ({\hat g}^{-1} (E)) = m (E)$ for any measurable set $E \subset \partial \DD$. If furthermore $g$ is not a rotation, it is well known that the mapping $\hat g$ is exact and hence, mixing and ergodic. In fact, dynamical properties of the boundary map of an inner function have been extensively studied after the pioneering papers of Aaranson, Pommerenke, Crazier, and Doering and Mañ\'e \cite{Aar78,Pom81,Cra91,DM91}. Mapping and distortion properties of inner functions have been studied in \cite{ref:AleksandrovMeasurablePartitionsCircle,ref:AleksandrovMultiplicityBoundaryValuesInnerFunctions,FP92,FPR96,FMP07} and the surveys \cite{ref:PoltoratskiSarasonACMeasures, ref:SaksmanACMeasures},  and several stochastic properties can be found in \cite{ref:NicolauSolerGibert2022,N22,IU23,AN24}. In many ways, these papers highlight the beautiful interplay between the dynamical properties of an inner function as a self-mapping of $\partial \DD$ and those as a self-mapping of $\DD$. 

The main purpose of this paper is to study ergodic and mixing properties of \textit{non-autonomous dynamics} of inner functions fixing the origin. This concerns compositions of the form $G_n \defeq g_n\circ g_{n-1}\circ\cdots\circ g_1$, where each $g_n$, $n\in\mathbb{N}$, is an inner function with $g_n (0) = 0$. Non-autonomous dynamics of inner functions appear naturally in complex dynamics when studying simply connected wandering domains of entire functions -- see e.g. \cite[Section 2]{BEFRS19} or \cite[Lemma  3.2]{Fer21}. As introduced in \cite{BEFRS22}, a sequence $\{g_n\}_{n\in\mathbb{N}}$ of inner functions fixing the origin is called \emph{contracting} if $G_n\to 0$ uniformly on compact subsets of $\DD$ as $n\to\infty$. The behaviour of $G_n$ in $\DD$ has been studied in \cite[Section 2]{BEFRS19} and \cite{Fer23}, where the following dichotomy has been proved. 

\begin{thmout}[\cite{BEFRS19} and \cite{Fer23}]
\label{thm:limits}
Let $g_n\colon\DD\to\DD$ be inner functions fixing the origin, and let $G_n\defeq g_n\circ\cdots \circ g_1$, $n\in\mathbb{N}$. 
\begin{enumerate}[(a)]
    \item The sequence $\{g_n\}_{n\in\mathbb{N}}$ is contracting if and only if $\sum_{n\geq 1} (1 - |g_n'(0)|) = \infty$.
    \item Assume $\{g_n\}_{n\in\mathbb{N}}$ is not contracting. Then any  (pointwise) limit function in $\DD$ of a  subsequence of $\{G_n\}_{n\in\mathbb{N}}$ is a non-constant inner function fixing the origin, and any two limit functions $H_1$ and $H_2$ satisfy $H_1 =\lambda \cdot H_2$ for some $\lambda \in \partial\DD$.
\end{enumerate}
\end{thmout}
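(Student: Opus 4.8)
The statement splits into the two claims (a) and (b), and the common starting point is to keep track of the derivative at the origin. Since each $g_n$ fixes $0$, so does $G_n$, and the chain rule gives $G_n'(0)=\prod_{k=1}^n g_k'(0)$, so that $|G_n'(0)|=\prod_{k=1}^n|g_k'(0)|$ is non-increasing with limit $L\in[0,1]$, and by the classical criterion for an infinite product one has $L=0$ if and only if $\sum_{k\ge1}(1-|g_k'(0)|)=\infty$. The plan is therefore: for (a), to show that $\{g_n\}$ is contracting exactly when $L=0$ — the direction ``contracting $\Rightarrow L=0$'' is immediate from the Cauchy estimate, since $G_n\to0$ locally uniformly forces $G_n'(0)\to0$, so all the work is in the converse; for (b), to assume $L>0$ and analyse the subharmonic functions $\log|G_n|$ by potential theory.

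For the remaining implication of (a) I would use a quantitative Schwarz lemma: if $h\colon\DD\to\DD$ is holomorphic with $h(0)=0$ then $w\mapsto h(w)/w$ maps $\DD$ into $\overline{\DD}$ with value $h'(0)$ at the origin, whence by Schwarz--Pick $|h(w)|\le|w|\,(|h'(0)|+|w|)/(1+|h'(0)|\,|w|)$. Writing $a_k=|g_k'(0)|$, fixing $r<1$, and setting $s_n=\sup_{|z|\le r}|G_n(z)|$, one applies this to $h=g_n$ at $w=G_{n-1}(z)$ and uses that $t\mapsto t(a+t)/(1+at)$ is increasing on $[0,1)$ to get $s_n\le s_{n-1}(a_n+s_{n-1})/(1+a_ns_{n-1})$. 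This map is a contraction of $[0,1)$, so $s_n$ is non-increasing and stays $\le r$; the identity $(a+t)/(1+at)=1-(1-a)(1-t)/(1+at)$ then gives $s_n\le s_{n-1}\bigl(1-\tfrac{1-r}{2}(1-a_n)\bigr)\le s_{n-1}\exp\!\bigl(-\tfrac{1-r}{2}(1-a_n)\bigr)$, and iterating yields $s_n\le r\exp\!\bigl(-\tfrac{1-r}{2}\sum_{k\le n}(1-a_k)\bigr)$, which tends to $0$ whenever $\sum_k(1-a_k)=\infty$. This is exactly locally uniform convergence $G_n\to0$, i.e. $\{g_n\}$ is contracting.

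For (b), assume $\{g_n\}$ is not contracting, so $L>0$. By Montel's theorem $\{G_n\}$ is normal, and the Vitali--Porter theorem shows that any pointwise subsequential limit $H$ is holomorphic with $|H|\le1$, $H(0)=0$ and $H'(0)=\lim_k G_{n_k}'(0)$; in particular $|H'(0)|=L>0$, so $H$ is non-constant and maps $\DD$ into $\DD$. Since $|G_n'(0)|\ge L>0$, each $G_n$ has a simple zero at $0$, so $\Psi_n:=G_n/z$ is an inner function (a composition of inner functions fixing the origin is inner) with $|\Psi_n(0)|=\prod_{k\le n}a_k\ge L$; set $w_n:=-\log|\Psi_n|\ge0$. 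The argument rests on two monotonicities. First, $|G_{n+1}|\le|G_n|$ by the Schwarz lemma (as $g_{n+1}(0)=0$), so $w_{n+1}\ge w_n$. Second, $\Psi_{n+1}/\Psi_n=G_{n+1}/G_n$ is bounded by $1$ and has unimodular boundary values a.e. (being a ratio of inner functions), hence is itself inner, so in the canonical factorisations $\Psi_n=c_nB_nS_{\mu_n}$ the zero sets increase and the singular measures satisfy $\mu_n\le\mu_{n+1}$. Consequently $w_n$ is the sum of the Green potential of the zero set of $\Psi_n$ and the Poisson integral $P[\mu_n]$, both increasing in $n$; since $w_n(0)=-\log|\Psi_n(0)|$ increases to $\log(1/L)<\infty$, the limiting zero set satisfies the Blaschke condition and $\mu:=\sup_n\mu_n$ is a finite measure, which is singular (a union of the $m$-null sets carrying the $\mu_n$ is still $m$-null). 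Hence $w_\infty:=\lim_n w_n$ equals a Green potential plus $P[\mu]$ with $\mu\perp m$, so $w_\infty$ has radial limit $0$ at $m$-a.e. point of $\partial\DD$ (the Green potential is $-\log$ of a Blaschke product, and $P[\mu]\to0$ a.e. by Fatou's theorem since $\mu$ is singular). On the other hand, along the subsequence $w_{n_k}(z)\to-\log|H(z)/z|$, while monotonicity gives $w_{n_k}(z)\to w_\infty(z)$, so $-\log|H(z)/z|=w_\infty(z)$ on $\DD$. Thus $H/z$ has unimodular radial limits a.e., i.e. $H/z$ — and therefore $H$ — is inner. Finally, the identity $-\log|H/z|=w_\infty$ holds for \emph{every} subsequential limit $H$, so any two limits $H_1,H_2$ satisfy $|H_1|=|H_2|$ on $\DD$; then $H_1/H_2$ extends to a holomorphic function on $\DD$ of constant modulus $1$, hence is a constant $\lambda\in\partial\DD$.

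The step I expect to be the real obstacle is the inner-ness of the limit in (b). Crude estimates — Jensen's formula, or subharmonicity of $\log|G_n/z|$ — only yield $\int_{\partial\DD}-\log|\widehat{H/z}|\,dm\le\log(1/L)$, whereas one needs the value $0$; and this is not a soft point, since a limit of inner functions with derivative at the origin bounded away from $0$ need not be inner (suitable products of singular inner functions can converge to a non-unimodular constant). What rescues the argument is the \emph{compositional} structure, which forces the successive inner functions $\Psi_n$ to divide one another, hence their singular boundary measures $\mu_n$ to increase with a still-singular limit. Beyond this, some care is needed in justifying the composition and division properties of inner functions used above (which rely on Löwner's lemma and standard boundary-value facts) and in checking that the radial-limit statements for Green potentials and for Poisson integrals of singular measures apply to the countably generated measures appearing in the limit.
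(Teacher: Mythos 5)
First, a point of reference: the paper does not prove Theorem~A --- it is imported verbatim from \cite{BEFRS19} and \cite{Fer23} --- so there is no internal proof to compare against, and your argument has to be judged on its own. On its own terms it is a genuine proof of both parts, and it is close in spirit to what the cited references do: part (a) via the Schwarz--Pick estimate for $g(z)/z$ iterated along the composition, and part (b) by exploiting the monotonicity $|G_{n+1}|\le|G_n|$ together with the compositional divisibility $G_{n+1}=G_n\cdot(\psi_{n+1}\circ G_n)$, which forces the zero divisors and singular measures of $\Psi_n=G_n/z$ to increase while staying bounded by $-\log L$. You correctly identify the genuinely delicate point --- that a locally uniform limit of inner functions with derivative bounded away from zero at the origin need not be inner without the divisibility structure --- and your resolution via monotone convergence of the Green potential and of the singular measures, followed by Fatou's theorem, is sound. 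The identification $-\log|H/z|=w_\infty$ for \emph{every} subsequential limit also delivers the rotation relation $H_1=\lambda H_2$ cleanly.

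There is, however, one concrete gap: the case where some $g_k'(0)=0$. Your assertion that ``$L=0$ if and only if $\sum_k(1-|g_k'(0)|)=\infty$ by the classical criterion'' is false as stated --- take $g_1(z)=z^2$ and $g_n(z)=z$ (or any $g_n$ with $\sum_{n\ge2}(1-|g_n'(0)|)<\infty$) for $n\ge2$; then $L=0$ but the sum converges --- so your route to ``contracting $\Rightarrow$ the sum diverges'' (via contracting $\Rightarrow L=0\Rightarrow$ divergence) breaks down. Worse, in part (b) the same degeneracy collapses the whole apparatus: if $G_{k_0}'(0)=0$ for some $k_0$ then $|H'(0)|=L=0$ no longer certifies that $H$ is non-constant, $\Psi_n=G_n/z$ has $\Psi_n(0)=0$, and $w_n(0)=+\infty$, so you lose the finiteness that gives the Blaschke condition and the finiteness of $\mu$. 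The repair is routine but must be said: since $\sum(1-|g_n'(0)|)<\infty$ forces $|g_n'(0)|\to1$, only finitely many derivatives vanish; fix $k_0$ past the last such index, run your entire argument on the tail $G_{k_0}^n$ (whose derivative product at $0$ is bounded below), and then recover the statement for $G_n=G_{k_0}^n\circ G_{k_0}$ by composing with the fixed non-constant inner function $G_{k_0}$ (for (a) one also checks that contractibility of $\{G_n\}$ is equivalent to that of the tail, using that $G_{k_0}(\DD)$ contains a disc about the origin). The authors themselves perform exactly this reduction elsewhere in the paper (``by discarding finitely many $g_n$''), which confirms that the degenerate case is not vacuous. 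With that amendment your proof is complete; the remaining appeals (compositions and bounded quotients of inner functions are inner, canonical factorisation, Fatou's theorem for singular measures) are standard and correctly invoked.
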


Much less has been said about non-autonomous dynamics of inner functions in the unit circle. In this paper, we focus on ergodic and mixing properties of non-autonomous dynamics of inner functions fixing the origin (for the ``opposite'' case where $G_n(0)$ tends to $\partial\DD$, see \cite{BEFRS22} for recurrence properties of such non-autonomous systems). In the non-autonomous setting, there exist measure-preserving sequences of transformations which are mixing in the usual sense, but such that time averages do not converge to the space average almost everywhere (see e.g. \cite{BS01}). In order to preserve the ``mixing implies ergodicity'' and the ``mixing is equivalent to ergodicity of any subsequence'' maxims and properly generalise the relevant concepts, we use the following definitions due to Berend and Bergelson \cite{BB84}. For $(X, \mathcal{A} , \mu)$ a probability space, we (here and henceforth) denote by $L^2 (\mu)$ the standard Hilbert space of complex valued measurable functions $\varphi$ defined on $X$ such that
\[
\|\varphi \|_2^2 = \int_X |\varphi|^2 d\mu < \infty. 
\]
\begin{defi}\label{def:mixergod}
Let $(X, \mathcal{A} , \mu)$ be a probability space. Let $f_n\colon X\to X$ be measurable, measure-preserving transformations and let $T_n\defeq f_n\circ\cdots \circ f_1$, $n\in\mathbb{N}$. We say that the sequence $\{T_n\}_{n\in\mathbb{N}}$ is \textit{ergodic} if
\[ \lim_{N \to \infty} \norm{\frac{1}{N}\sum_{n=1}^N \varphi\circ T_n - \int_X \varphi\,d\mu}_2 = 0,  \]
for every $\varphi\in L^2(\mu)$. 

The sequence $\{T_n\}_{n\in\mathbb{N}}$ is called \textit{mixing} if all its subsequences are ergodic.
\end{defi}

It is not hard to show that if $\{T_n \}_{n\in\mathbb{N}}$ is ergodic in this sense, then there exist no non-trivial completely invariant sets. Similarly, if $\{T_n \}_{n\in\mathbb{N}}$ is mixing in the sense of the above definition, then it is also mixing in the usual sense, that is, $ \mu\left(A\cap T_n^{-1}(B)\right)\to \mu(A)\mu(B)$ as $n\to+\infty$, for any pair of measurable sets $A, B\subset X$. Notice that the converse statements do not hold, as evidenced by the examples in \cite{BS01}. Moreover, the conditions given in Definition \ref{def:mixergod} properly generalise the usual ones -- more specifically, if $T_n = f^n$ for some measure-preserving map $f\colon X\to X$, then the sequence $\{T_n\}_{n\in\mathbb{N}}$ is ergodic (resp. mixing) if and only if $f$ is ergodic (resp. mixing); see \cite{BB84}.

We now fix some notation. Given inner functions $g_n$ fixing the origin, $n\geq 1$, we consider the inner functions 
\begin{equation}
    \label{notation}
    G_n\defeq g_n\circ\cdots\circ g_1,  \qquad G_m^n \defeq g_n\circ\cdots\circ g_{m+1}, \, n > m \geq 0 .
\end{equation}
Notice that $G_0^n = G_n$ and $G_{n-1}^{n} = g_n$ for any $n\geq 1$. We say that the sequence $\{\widehat G_n\}_{n\in\mathbb{N}}$ is ergodic (respectively mixing) if the corresponding condition in Definition \ref{def:mixergod} holds. We can now state our first result.

\begin{thm}\label{thm:ergodic} Let $g_n$, $n\in\mathbb{N}$,  be inner functions fixing the origin and let  $G_n$, $G_m^n$ be given by \eqref{notation}. The sequence $\{\widehat G_n \}_{n\in\mathbb{N}}$ is ergodic if and only if 
\begin{equation}\label{eq:ergodiccondition}
    \lim_{N\to+\infty} \Re\left(\frac{1}{N^2}\sum_{m=1}^{N-1} \sum_{n = m + 1}^N \left((G_m^n)'(0)\right)^\ell\right) = 0, 
\end{equation}
for any $\ell\in\mathbb{N}$.
\end{thm}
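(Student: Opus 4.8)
The plan is to work in the Hilbert space $L^2(m)$ with the orthonormal Fourier basis $\{z^\ell\}_{\ell\in\mathbb Z}$ and to translate the condition of Definition \ref{def:mixergod} into a statement about the averages $\frac1N\sum_{n=1}^N\widehat G_n^{\,\ell}$.

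First I would reduce the problem to the monomials $\varphi = z^\ell$, $\ell\in\mathbb N$. By L\"owner's Lemma each $\widehat g_n$, and hence each $\widehat G_n$, preserves $m$; applying $m$-invariance to $|\varphi|^2$ shows that each composition operator $C_{G_n}\colon\varphi\mapsto\varphi\circ\widehat G_n$ is a linear isometry of $L^2(m)$ fixing the constants, so the averaging operators $A_N\defeq\frac1N\sum_{n=1}^N C_{G_n}$ satisfy $\|A_N\|\le1$ for all $N$. Splitting an arbitrary $\varphi$ as $\hat\varphi(0)+\sum_{\ell\ne0}\hat\varphi(\ell)z^\ell$ and approximating the zero-mean part in $L^2$ by trigonometric polynomials, this uniform bound reduces the ergodicity of $\{\widehat G_n\}$ to proving $\|A_N(z^\ell)\|_2\to0$ for every $\ell\in\mathbb Z\setminus\{0\}$. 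Since $z^{-\ell}\circ\widehat G_n=\overline{\widehat G_n^{\,\ell}}$, the case $\ell<0$ is the complex conjugate of the case $|\ell|$, so it suffices to treat $\ell\in\mathbb N$, for which $\int z^\ell\,dm=0$ and therefore $A_N(z^\ell)=\frac1N\sum_{n=1}^N\widehat G_n^{\,\ell}$.

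Next I would compute the inner products entering $\|A_N(z^\ell)\|_2^2=\frac1{N^2}\sum_{n,k=1}^N\langle\widehat G_n^{\,\ell},\widehat G_k^{\,\ell}\rangle$. Two facts are used. Since $C_{G_k}$ is an isometry carrying the orthonormal system $\{z^j\}_{j\ge0}$ onto $\{\widehat G_k^{\,j}\}_{j\ge0}$, the latter is again orthonormal. And since $G_n=G_k^n\circ G_k$ with $G_k$ inner fixing $0$, one has $\widehat G_n^{\,\ell}=\widehat{(G_k^n)^\ell}\circ\widehat G_k=C_{G_k}\big((G_k^n)^\ell\big)$; writing the Taylor expansion $(G_k^n)^\ell=\sum_{j\ge\ell}c_j z^j$, whose leading coefficient is $c_\ell=\big((G_k^n)'(0)\big)^\ell$ because $G_k^n(0)=0$, and using the orthonormality, I get, for $n>k$,
\[
\langle\widehat G_n^{\,\ell},\widehat G_k^{\,\ell}\rangle=\sum_{j\ge\ell}c_j\langle\widehat G_k^{\,j},\widehat G_k^{\,\ell}\rangle=c_\ell=\big((G_k^n)'(0)\big)^\ell,
\]
while the value for $n=k$ is $1$. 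Inserting this and splitting the double sum into the diagonal, the part $n>k$, and the part $n<k$ --- which, after relabelling, is the complex conjugate of the part $n>k$ --- yields
\[
\Big\|\tfrac1N\sum_{n=1}^N\widehat G_n^{\,\ell}\Big\|_2^2=\frac1N+2\,\Re\Big(\frac1{N^2}\sum_{m=1}^{N-1}\sum_{n=m+1}^N\big((G_m^n)'(0)\big)^\ell\Big).
\]
Letting $N\to\infty$ the term $1/N$ disappears, so $\|A_N(z^\ell)\|_2\to0$ is equivalent to \eqref{eq:ergodiccondition} for that particular $\ell$; combining this with the reduction in the first step gives both implications of the theorem.

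I do not expect a deep obstacle: once the right inner products are identified the argument is essentially a Fourier computation. The points requiring care are the standard facts underlying the middle step --- that $C_{G_k}$ is a well-defined linear isometry of $L^2(m)$, that $\widehat{A\circ B}=\widehat A\circ\widehat B$ holds $m$-almost everywhere when $B$ is inner, and that composition with $G_k$ commutes with the $H^2$-convergent Taylor expansion of $(G_k^n)^\ell$ --- together with the bookkeeping in the double sum. The conceptual crux is the observation that orthonormality of $\{\widehat G_k^{\,j}\}_{j\ge0}$ wipes out every contribution except the leading Taylor coefficient $\big((G_k^n)'(0)\big)^\ell$, which is precisely what makes \eqref{eq:ergodiccondition} surface.
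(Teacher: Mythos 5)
Your proposal is correct and follows essentially the same route as the paper: reduce by density to the monomials $e_\ell$, expand $\bigl\|\frac1N\sum_n e_\ell\circ\widehat G_n\bigr\|_2^2$ into a diagonal term $1/N$ plus cross terms, and identify $\langle e_\ell\circ\widehat G_n, e_\ell\circ\widehat G_m\rangle$ with $\bigl((G_m^n)'(0)\bigr)^\ell$. Your derivation of that inner product via the orthonormality of $\{\widehat G_m^{\,j}\}_{j\ge0}$ is just a repackaging of the paper's use of L\"owner's Lemma followed by reading off the $\ell$-th Taylor coefficient of $(G_m^n)^\ell$, so there is no substantive difference.
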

Since $((G_m^n)'(0))^\ell$ are complex numbers, the double sum in Condition (\ref{eq:ergodiccondition}) may have cancellations, so that (\ref{eq:ergodiccondition}) does not depend solely on the modulus of $(G_m^n)'(0)$. With that in mind, in Section \ref{sec:further} we give a necessary and a sufficient condition for ergodicity. Using these conditions, in the extreme cases when $g_n' (0) > 0$ for every $n\in\mathbb{N}$ or when $\{g_n\}_{n\in\mathbb{N}}$ is not contracting, we obtain the following descriptions. 

\begin{thm}\label{thm:easieriff}
Let $g_n$, $n\in\mathbb{N}$,  be inner functions fixing the origin and let $G_n$, $G_m^n$ be given by \eqref{notation}.
\begin{enumerate}[(i)]
    \item Assume $g_n'(0) > 0$ for all $n \geq 1$. Then, the sequence $\{\widehat G_n \}_{n\in\mathbb{N}}$ is ergodic if and only if
    \begin{equation}\label{eq:mod}
    \lim_{N \to \infty } \prod_{k=\fl{N(1 - \varepsilon)}}^N g_k'(0) = 0, 
    \end{equation}
     for any $0< \varepsilon < 1$.
\item Assume $g_n'(0) > 0$ for all $n \geq 1$. Then, the sequence $\{\widehat G_n \}_{n\in\mathbb{N}}$ is mixing if and only if for every $\epsilon > 0$ there exists $M_0$ such that, for any $N > M > M_0$, we have
    \begin{equation}\label{eq:mod1}
    \prod_{k = N - M }^N g_k'(0) < \epsilon. 
    \end{equation}
    \item Assume the sequence $\{g_n\}_{n\in\mathbb{N}}$ is not contracting. Then the sequence $\{ \widehat G_n\}_{n\in\mathbb{N}}$ is ergodic if and only if the sequence $\{e^{i\arg G_n'(0)}\}_{n\in\mathbb{N}}$ is equidistributed on $\partial\DD$. 
\end{enumerate}
\end{thm}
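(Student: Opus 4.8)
The plan is to derive all three statements from Theorem~\ref{thm:ergodic} (and the refined criteria of Section~\ref{sec:further}) via the following exact identity. Fix $\ell\in\mathbb N$. Since each $\widehat G_n$ preserves $m$ by L\"owner's lemma and $G_n=G_m^n\circ G_m$, expanding the inner product, changing variables by $\widehat G_m$, and reading off the $\ell$-th Fourier coefficient of the bounded holomorphic function $(G_m^n)^{\ell}$ --- which equals its $\ell$-th Taylor coefficient $\bigl((G_m^n)'(0)\bigr)^{\ell}$ since $G_m^n(0)=0$ --- one gets
\begin{gather*}
\ip{(\widehat G_n)^{\ell}}{(\widehat G_m)^{\ell}}=\bigl((G_m^n)'(0)\bigr)^{\ell}\quad(1\le m<n),\\
\norm{\frac1N\sum_{n=1}^N(\widehat G_n)^{\ell}}_2^2=\frac1N+\frac{2}{N^2}\,\Re\sum_{1\le m<n\le N}\bigl((G_m^n)'(0)\bigr)^{\ell}.
\end{gather*}
Using L\"owner-invariance of $m$ and density of trigonometric polynomials in $L^2(m)$ (together with the uniform bound $\norm{\frac1N\sum_{n}(\widehat G_n)^{\ell}}_2\le1$), $\{\widehat G_n\}$ is ergodic (resp.\ mixing) precisely when the second quantity above tends to $0$ for every $\ell\in\mathbb N$ (resp.\ along every subsequence of $N$, for every $\ell$); this is Theorem~\ref{thm:ergodic}, and the remaining work is to make it explicit in the three regimes.

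For (i) and (ii), the hypothesis $g_n'(0)>0$ gives $(G_m^n)'(0)=\prod_{m<k\le n}g_k'(0)\in(0,1]$; write $P(m,n)$ for this quantity. Then the $\Re$ is vacuous and $P(m,n)^{\ell}\le P(m,n)$, so ``for all $\ell$'' reduces to ``$\ell=1$''. Writing $p_n\defeq\prod_{k\le n}g_k'(0)$ (so $P(m,n)=p_n/p_m$), part (i) amounts to the equivalence of $\frac1{N^2}\sum_{m<n\le N}p_n/p_m\to0$ with \eqref{eq:mod}. For necessity, if \eqref{eq:mod} fails for some $\varepsilon$ pick $\delta>0$ and $N_j\to\infty$ with $\prod_{k=\fl{N_j(1-\varepsilon)}}^{N_j}g_k'(0)\ge\delta$; then $[m+1,n]\subseteq[\fl{N_j(1-\varepsilon)},N_j]$ forces $p_n/p_m\ge\delta$, and the $\sim\tfrac12(\varepsilon N_j)^2$ pairs with $\fl{N_j(1-\varepsilon)}\le m<n\le N_j$ already make the average $\ge\tfrac12\varepsilon^2\delta$. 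For sufficiency, fix $\varepsilon$ and split the pairs according to whether $n-m<\varepsilon n$ (at most $\sim\tfrac12\varepsilon N^2$ of them, each $\le1$) or $m\le\fl{(1-\varepsilon)n}$; in the latter case $p_n/p_m\le p_n/p_{\fl{(1-\varepsilon)n}}\eqdef\eta_n$, and --- using that $\varepsilon\mapsto\prod_{k=\fl{n(1-\varepsilon)}}^ng_k'(0)$ is non-increasing, so \eqref{eq:mod} for a slightly smaller exponent forces $\eta_n\to0$ --- one gets $\frac1N\sum_{n\le N}\eta_n\to0$; letting $N\to\infty$ then $\varepsilon\to0$ proves (i). For (ii), applying Theorem~\ref{thm:ergodic} to the inner functions $G_{n_{k-1}}^{n_k}$ shows that $\{\widehat G_{n_k}\}$ is ergodic iff $\frac1{K^2}\sum_{k<k'\le K}P(n_k,n_{k'})\to0$. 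If \eqref{eq:mod1} holds, then for fixed $\epsilon$, the associated $M_0$, and \emph{any} subsequence, each inner sum over $k<k'\le K$ has at most $M_0$ terms with $n_{k'}-n_k\le M_0$ (each $\le1$) and all others $<\epsilon$, so the average is $\le M_0/K+\epsilon$; hence $\{\widehat G_n\}$ is mixing. Conversely, if \eqref{eq:mod1} fails there is $\epsilon_0>0$ and arbitrarily long blocks $[a,b]$ with $\prod_{k=a}^bg_k'(0)\ge\epsilon_0$; since shrinking a block only increases its product, this upgrades to pairwise disjoint such blocks $[a_i,b_i]$ that are also arbitrarily far out, with lengths $\ell_i\to\infty$ growing fast enough that $\sum_{i<I}\ell_i\le\ell_I/100$. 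Letting $\{n_k\}$ enumerate $\bigcup_i[a_i,b_i]$, at each truncation $K_I=\sum_{i\le I}\ell_i$ the within-block pairs alone give $\frac1{K_I^2}\sum_{k<k'\le K_I}P(n_k,n_{k'})\ge\epsilon_0\binom{\ell_I}{2}/K_I^2$, which stays bounded away from $0$; so that subsequence is not ergodic and $\{\widehat G_n\}$ is not mixing.

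For (iii) we may assume $g_n'(0)\neq0$ for all $n$ (not contracting forces at most finitely many zero derivatives, and one passes to the tail after the last one). Then Theorem~\ref{thm:limits}(a) gives $\sum_n(1-|g_n'(0)|)<\infty$, so $|G_n'(0)|=\prod_{k\le n}|g_k'(0)|$ converges to some $c>0$, whence $|(G_m^n)'(0)|=|G_n'(0)|/|G_m'(0)|\to1$ uniformly in $n>m$ as $m\to\infty$. Writing $(G_m^n)'(0)=|(G_m^n)'(0)|\,e^{i(\alpha_n-\alpha_m)}$ with $\alpha_n$ a fixed additive branch of $\arg G_n'(0)$, in the double sum of \eqref{eq:ergodiccondition} one discards the $O(MN)$ pairs with $m<M$ and replaces $|(G_m^n)'(0)|^{\ell}$ by $1$ up to a uniform $O(\varepsilon)$ error once $M=M(\varepsilon)$ is large; hence \eqref{eq:ergodiccondition} is equivalent to $\frac1{N^2}\sum_{1\le m<n\le N}\cos\!\bigl(\ell(\alpha_n-\alpha_m)\bigr)\to0$ for every $\ell\in\mathbb N$. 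Since $\sum_{m,n\le N}\cos\!\bigl(\ell(\alpha_n-\alpha_m)\bigr)=\bigl|\sum_{n\le N}e^{i\ell\alpha_n}\bigr|^2$, this says $\frac1N\bigl|\sum_{n\le N}e^{i\ell\alpha_n}\bigr|\to0$ for every $\ell\ge1$, which --- as $-\ell$ yields the conjugate --- is exactly Weyl's criterion for equidistribution of $\{e^{i\alpha_n}\}=\{e^{i\arg G_n'(0)}\}$ on $\partial\DD$.

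The steps in (i) and (iii) are largely bookkeeping; the genuine obstacle will be the converse direction of (ii) --- manufacturing a \emph{single} subsequence whose Cesàro averages stay away from the space average when \eqref{eq:mod1} fails. The delicate points are (a) that failure of \eqref{eq:mod1} produces long high-product blocks located arbitrarily far out along the sequence, not merely long blocks that could all cluster near its beginning, and (b) that the block lengths can be arranged so the quadratically many within-block pairs are not washed out below a positive constant by the $1/K^2$ normalisation at the truncation times. A minor subtlety in (i) is that one must use the monotonicity of $\varepsilon\mapsto\prod_{k=\fl{N(1-\varepsilon)}}^Ng_k'(0)$ to upgrade \eqref{eq:mod} to control uniform over $\varepsilon$ in compact subsets of $(0,1)$.
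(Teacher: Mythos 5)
Your proposal is correct. Its skeleton coincides with the paper's --- everything is reduced to the double Ces\`aro criterion of Theorem \ref{thm:ergodic}, positivity of the derivatives collapses ``for all $\ell$'' to ``$\ell=1$'' in (i)--(ii), and (iii) comes down to Weyl's criterion --- but the middle layer is genuinely different: the paper funnels all three parts through the one-sided conditions of Theorem \ref{thm:necsuf} (the sufficient condition $\frac1N\sum_{m}((G_m^N)'(0))^\ell\to0$ and a Cauchy--Schwarz necessary condition) together with Lemma \ref{lem:cessums} on Ces\`aro sums, whereas you manipulate the double sum directly and never need either. This buys you something in (iii): rather than proving the two implications separately via Theorem \ref{thm:necsuf}(a) and (b), you obtain a single equivalence of \eqref{eq:ergodiccondition} with $\frac{1}{N^2}\bigl|\sum_{n\le N}e^{i\ell\arg G_n'(0)}\bigr|^2\to 0$ through the identity $\sum_{m,n}\cos\bigl(\ell(\alpha_n-\alpha_m)\bigr)=\bigl|\sum_n e^{i\ell\alpha_n}\bigr|^2$, after using $|(G_m^n)'(0)|^\ell\to1$ (uniformly in $n>m$ as $m\to\infty$) to discard the moduli; that is Weyl's criterion on the nose. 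In (ii) you are more explicit than the paper, which merely asserts that the subsequence version of \eqref{eq:mod} ``is equivalent to \eqref{eq:mod1}'': your construction of pairwise disjoint, arbitrarily remote, rapidly lengthening blocks of product at least $\epsilon_0$ (using that shrinking a block only increases its product, and arranging $\sum_{i<I}\ell_i\le\ell_I/100$ so that the quadratically many within-block pairs survive the $1/K_I^2$ normalisation) supplies the converse argument the paper omits, and it is sound. Two harmless points to spell out when writing this up: in (i), the bound $\eta_n\le\prod_{k=\fl{n(1-\varepsilon')}}^{n}g_k'(0)$ for $\varepsilon'<\varepsilon$ requires $n$ large enough that $\fl{n(1-\varepsilon')}\ge\fl{n(1-\varepsilon)}+1$; and in (iii), discarding the finitely many indices with $g_n'(0)=0$ perturbs the double Ces\`aro averages only by an $O(N)/N^2$ boundary term, so ergodicity of the original and shifted sequences is indeed equivalent.
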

Theorem \ref{thm:easieriff} deserves some comments.

First, when $g_n(z) = e^{i\theta}z$, $n \geq 1$, the statement (iii) tells us that $\{ \widehat G_n\}_{n\in\mathbb{N}}$ is ergodic if and only if $\theta$ is irrational -- in other words, we recover the classical result of ergodic theory that a rotation of the circle is ergodic if and only if it is irrational. Condition \eqref{eq:mod} in (i) is related to the speed of convergence of $G_n (z)$, $z \in \DD$, to $0$. Roughly speaking, the faster $G_n$ tends to $0$ on $ \DD$, the more expanding $\widehat G_n$ is on $\partial \DD$; see \cite[Theorem 4.8]{Mas13}. Hence, condition (i) is related to the classical existence of ergodic measures for expanding maps; for related ideas and results, see e.g. \cite[Chapter 11]{VO16}, \cite{GS09}, and even \cite{TPvS19} for a non-autonomous approach. Thus, roughly speaking, Theorem \ref{thm:easieriff} says that in our setting there are only two mechanisms for ergodicity, and they are the classical ones corresponding to ``expanding maps'' and ``irrational rotations''. 

Second, Pommerenke \cite{Pom81} showed that contracting sequences of inner functions fixing the origin are mixing in the usual sense on $\partial \DD$, that is, $ m \left(A\cap G_n^{-1}(B)\right)\to m(A) m(B)$ as $n\to+\infty$, for any pair of measurable sets $A, B \subset \partial \DD$. We discuss a converse to Pommerenke's result in Section \ref{sec:usual}.

Third, note that, by Theorem \ref{thm:limits}, $\{g_n \}_{n\in\mathbb{N}}$ is contracting if and only if 
\[
\prod_{j=k}^\infty g_j' (0)=0 \text{ for all $k\in\mathbb{N}$.}
\]
Hence conditions \eqref{eq:mod} and \eqref{eq:mod1} can be understood as quantitative versions of contractibility. In particular, condition \eqref{eq:mod} outlined in Theorem \ref{thm:easieriff}(i) implies that the sequence $\{g_n \}_{n\in\mathbb{N}}$ is contracting, but is strictly stronger; in Section \ref{sec:examples}, we give an example of a contracting sequence $\{g_n \}_{n\in\mathbb{N}}$ which does not satisfy (\ref{eq:ergodiccondition}), and is therefore not ergodic in the sense of Definition \ref{def:mixergod}. Notice that, by the results of Pommerenke \cite{Pom81} mentioned above, this sequence is also mixing in the usual sense; we see that (as in \cite{BS01}) a sequence can be mixing in the usual sense, but not ergodic. From Theorem \ref{thm:easieriff} we derive the following more straightforward sufficient conditions for ergodicity and mixing.
\begin{cor}
\label{cor:mix} 
Let $g_n$, $n\in\mathbb{N}$,  be inner functions fixing the origin and let $G_n$ be given by \eqref{notation}. 
\begin{enumerate}[(a)]
\item Assume that  for any $0 < \varepsilon < 1$ we have 
\begin{equation}
    \label{suferg}
   \lim_{N \to \infty} \prod_{k=\fl{N(1 - \varepsilon)}}^N g_k'(0) = 0.
\end{equation}
Then the sequence $\{\widehat G_n \}_{n\in\mathbb{N}}$ is ergodic.
\item Assume that for every $\epsilon > 0$ there exists $M_0$ such that, for $N > M > M_0$,
\begin{equation}
    \label{sufmix}
\prod_{k=N-M}^N |g_k'(0)| < \epsilon.
\end{equation}
Then the sequence $\{\widehat G_n \}_{n\in\mathbb{N}}$ is mixing.
\end{enumerate}
\end{cor}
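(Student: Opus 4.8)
The plan is to deduce both parts from the characterisation in Theorem~\ref{thm:ergodic}, after first observing that Condition~\eqref{eq:ergodiccondition} follows from a much cruder estimate involving only the moduli $|g_k'(0)|$. Indeed, each $G_m^n$ is an inner function fixing the origin, so the chain rule gives $(G_m^n)'(0)=\prod_{k=m+1}^n g_k'(0)$ and in particular $|(G_m^n)'(0)|=\prod_{k=m+1}^n|g_k'(0)|\le 1$; consequently $|(G_m^n)'(0)|^\ell\le|(G_m^n)'(0)|$ for every $\ell\in\mathbb{N}$, and since $|\Re(w)|\le|w|$, Condition~\eqref{eq:ergodiccondition} holds for all $\ell$ as soon as
\[
\lim_{N\to\infty}\frac{1}{N^2}\sum_{1\le m<n\le N}\ \prod_{k=m+1}^{n}|g_k'(0)| = 0 .
\]
Writing $a_k\defeq|g_k'(0)|\in[0,1]$, both hypotheses of the corollary become real statements about $\{a_k\}$; for~\eqref{suferg}, note that a product of complex numbers tends to $0$ exactly when its modulus does, so \eqref{suferg} says $\prod_{k=\fl{N(1-\varepsilon)}}^{N}a_k\to 0$.

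For part~(a), I would group the double sum by the length $j=n-m$: with $R_n\defeq\sum_{j=1}^{n-1}\prod_{k=n-j+1}^{n}a_k$ the sum above equals $N^{-2}\sum_{n=2}^{N}R_n$, so it is enough to show $R_n=o(n)$, as this already forces $\sum_{n\le N}R_n=o(N^2)$. Fix $\varepsilon\in(0,1)$. The terms with $j$ of order at most $\varepsilon n$ contribute at most a constant times $\varepsilon n$, since each factor is $\le 1$. For the remaining $j$, the interval $[n-j+1,n]$ contains $[\fl{n(1-\varepsilon)},n]$ once $n$ is large (here one splits at $j$ of order $\varepsilon n$ with slightly generous constants to absorb the floor), so $\prod_{k=n-j+1}^{n}a_k\le\prod_{k=\fl{n(1-\varepsilon)}}^{n}a_k$, and as there are at most $n$ such $j$ they contribute at most $n\prod_{k=\fl{n(1-\varepsilon)}}^{n}a_k$. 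By \eqref{suferg} this last product tends to $0$, so $R_n\le C\varepsilon n$ for all large $n$; letting $\varepsilon\to 0$ gives $R_n=o(n)$.

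For part~(b), I would use that a subsequence $\{\widehat G_{n_j}\}_j$ is again a composition sequence of inner functions fixing the origin: setting $n_0=0$ and $h_j\defeq g_{n_j}\circ\cdots\circ g_{n_{j-1}+1}$ one has $\widehat G_{n_j}=\widehat h_j\circ\cdots\circ\widehat h_1$ and, directly from \eqref{notation}, $h_q\circ\cdots\circ h_{p+1}=G_{n_p}^{n_q}$. Applying Theorem~\ref{thm:ergodic} to $\{h_j\}$ together with the reduction of the first paragraph, ergodicity of $\{\widehat G_{n_j}\}_j$ is guaranteed once $N^{-2}\sum_{1\le p<q\le N}\prod_{k=n_p+1}^{n_q}a_k\to 0$. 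Now $\prod_{k=n_p+1}^{n_q}a_k$ is a product of $n_q-n_p\ge q-p$ consecutive terms; given $\epsilon>0$, choose $M_0$ as in~\eqref{sufmix}, so that whenever $q-p>M_0+1$ this product is $<\epsilon$. The pairs with $q-p\le M_0+1$ number at most $(M_0+1)N$ and contribute at most $(M_0+1)/N\to 0$ to the average, while the rest contribute at most $\epsilon/2$. Hence the $\limsup$ of the average is $\le\epsilon/2$, and since $\epsilon$ is arbitrary the average tends to $0$. Crucially, $M_0$ depends only on $\epsilon$ and not on the subsequence, so every subsequence of $\{\widehat G_n\}$ is ergodic and $\{\widehat G_n\}$ is therefore mixing.

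Rather than a genuine obstacle, the crux is the observation in the first paragraph: because $|(G_m^n)'(0)|\le 1$, the exponent $\ell$ and the possible cancellations in~\eqref{eq:ergodiccondition} are irrelevant for \emph{sufficiency}, and both parts then reduce to elementary double-sum estimates. The only mild care needed is with the floor functions in~\eqref{suferg} in part~(a) and with the harmless fact that $n_q-n_p$ may exceed $q-p$ in part~(b), which only strengthens the estimate.
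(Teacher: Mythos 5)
Your proof is correct and follows essentially the same route as the paper: reduce to the moduli $|g_k'(0)|$ via the triangle inequality (which disposes of the exponent $\ell$ and of cancellations), split the resulting sum at $m\approx N(1-\varepsilon)$ exactly as in the proof of Theorem~\ref{thm:easieriff}(i), and handle mixing by applying the same estimate to the block compositions $h_j=g_{n_j}\circ\cdots\circ g_{n_{j-1}+1}$ arising from an arbitrary subsequence. The only cosmetic difference is that you work directly with the double sum of Theorem~\ref{thm:ergodic} and re-derive the Ces\`aro step, whereas the paper routes the argument through the single-sum sufficient condition \eqref{eq:suf} of Theorem~\ref{thm:necsuf}(a); your intermediate claim $R_n=o(n)$ is precisely that condition.
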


Combining this result and part (iii) of Theorem \ref{thm:easieriff} we obtain that contracting sequences are precisely those which have a mixing subsequence. Notice the similarity to Theorem \ref{thm:mixusual}.

 \begin{cor}
\label{cor:mix1}
Let $g_n$, $n\in\mathbb{N}$,  be inner functions fixing the origin and let $G_n$ be given by \eqref{notation}. Then $\{g_n \}_{n\in \mathbb{N}}$ is contracting if and only if
$\{\widehat G_n \}_{n\in\mathbb{N}}$ has a mixing subsequence.
\end{cor}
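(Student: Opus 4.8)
The key observation is that any subsequence of a composition sequence is again a composition sequence. Fix a strictly increasing sequence $\{n_k\}_{k\in\mathbb{N}}$ of positive integers and put $n_0=0$; writing $h_k\defeq G_{n_{k-1}}^{n_k}=g_{n_k}\circ\cdots\circ g_{n_{k-1}+1}$, each $h_k$ is an inner function fixing the origin, one has $\widehat h_k\circ\cdots\circ\widehat h_1=\widehat G_{n_k}$, and $h_k'(0)=\prod_{j=n_{k-1}+1}^{n_k}g_j'(0)$. Since the ergodicity and mixing conditions of Definition \ref{def:mixergod} refer only to the maps $\widehat G_{n_k}$ and not to their factorisation, ``$\{\widehat G_n\}_{n\in\mathbb{N}}$ has a mixing subsequence'' means precisely that, for some such $\{n_k\}$, the composition sequence $\{\widehat G_{n_k}\}_{k\in\mathbb{N}}$ generated by the blocks $h_k$ is mixing. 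I will also use the following elementary fact: for any subsequence $\{n_k\}$, the block sequence $\{h_k\}$ is contracting if and only if $\{g_n\}$ is, because by Theorem \ref{thm:limits} contractibility of either amounts to $\prod_{j\geq m}|g_j'(0)|=0$ for every $m$, and this holds for every $m$ as soon as it holds along the unbounded set $\{n_{k-1}+1:k\in\mathbb{N}\}$.

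For the implication ``contracting $\Rightarrow$ has a mixing subsequence'', assume $\{g_n\}$ is contracting, so $\prod_{j\geq m}|g_j'(0)|=0$ for every $m$. Then I would choose $0=n_0<n_1<n_2<\cdots$ greedily so that $|h_k'(0)|=\prod_{j=n_{k-1}+1}^{n_k}|g_j'(0)|<1/2$ for every $k$. For any $K>L$ the product $\prod_{k=K-L}^{K}|h_k'(0)|$ has $L+1$ factors, each below $1/2$, hence is at most $2^{-(L+1)}$; so, given $\epsilon>0$, choosing $M_0$ with $2^{-(M_0+2)}<\epsilon$ makes this product $<\epsilon$ whenever $K>L>M_0$. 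Thus the block sequence $\{h_k\}$ satisfies hypothesis \eqref{sufmix} of Corollary \ref{cor:mix}(b), so $\{\widehat G_{n_k}\}_{k\in\mathbb{N}}$ is mixing and $\{\widehat G_n\}_{n\in\mathbb{N}}$ has a mixing subsequence.

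For the converse I would argue contrapositively. Suppose $\{g_n\}$ is not contracting, and let $\{n_k\}$ be arbitrary. By the equivalence above the block sequence $\{h_k\}$ is not contracting; likewise, for any further subsequence $\{n_{k_j}\}_j$, the blocks $\tilde h_j\defeq G_{n_{k_{j-1}}}^{n_{k_j}}$ form a non-contracting sequence, and $\widehat{\tilde h}_j\circ\cdots\circ\widehat{\tilde h}_1=\widehat G_{n_{k_j}}$. Were $\{\widehat G_{n_k}\}_k$ mixing, every such subsequence $\{\widehat G_{n_{k_j}}\}_j$ would be ergodic, so Theorem \ref{thm:easieriff}(iii) applied to the non-contracting sequence $\{\tilde h_j\}_j$ would force $\{e^{i\arg G_{n_{k_j}}'(0)}\}_j$ to be equidistributed on $\partial\DD$. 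Hence every subsequence of $w_k\defeq e^{i\arg G_{n_k}'(0)}$, $k\in\mathbb{N}$, would be equidistributed; but $\partial\DD$ is compact, so $\{w_k\}$ has a subsequence converging to some $w_\infty$, whose empirical measures converge weakly to $\delta_{w_\infty}\neq m$, so it is not equidistributed. This contradiction shows that no subsequence of $\{\widehat G_n\}$ is mixing.

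I do not expect a genuine obstacle: the statement is in essence a bookkeeping consequence of Corollary \ref{cor:mix}(b), Theorem \ref{thm:easieriff}(iii), and Theorem \ref{thm:limits}. The point that most needs care is the inheritance of (non-)contractibility under passage to the block sequence and, recursively, to all further subsequences -- this is exactly what licenses the application of Theorem \ref{thm:easieriff}(iii) at every level -- together with the elementary remark that in a compact metric space a sequence cannot have all of its subsequences equidistributed.
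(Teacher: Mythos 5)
Your proof is correct and follows essentially the same route as the paper: the forward direction reproduces the paper's greedy block construction (Corollary \ref{cor:examples}(i)) feeding into Corollary \ref{cor:mix}(b), and the converse is the paper's appeal to Theorem \ref{thm:easieriff}(iii), with the useful extra detail that non-contractibility passes to all block subsequences and that a sequence in the compact set $\partial\DD$ cannot have every subsequence equidistributed. No gaps.
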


Finally, we wish to discuss what our results mean for the \textit{recurrence} of the sequence $\{\widehat G_n \}_{n\in\mathbb{N}}$. Let $(X,\mathcal{A} , \mu)$ be a measure space. A sequence of measurable, measure-preserving transformations $T_n\colon X \to X$ is recurrent if for any measurable set $A \subset X$ we have that $T_n(x)\in A$ for infinitely many $n \geq 1$, for $\mu$-almost every point $x\in A$. In autonomous dynamics, that is when $T_n = T^n$, a classical theorem of Poincar\'e (see e.g. \cite[Theorem 1.2.1]{VO16}) tells us that if $\mu (X) < \infty$, then the sequence $\{T^n \}$ is recurrent for any measure-preserving transformation $T$. However, this may fail if the preserved measure is infinite, or if the system is non-autonomous; see \cite{DM91} and \cite{BEFRS22}. It turns out, however, that ergodicity in the sense of Definition \ref{def:mixergod} is sufficient for recurrence. In fact, the following stronger result, which is well-known in the autonomous case, holds. 
\begin{thm}\label{prop:rec}
Let $(X, \mathcal{A}, \mu)$ be a probability space. Let $T_n\colon X\to X$, $n \geq 1$, be measurable, measure-preserving transformations. If $\{T_n\}_{n\in\mathbb{N}}$ has an ergodic subsequence, then:
\begin{enumerate}[(i)]
    \item $\{T_n\}_{n\in\mathbb{N}}$ is recurrent;
    \item If, furthermore, $\mathrm{supp}(\mu) = X$ and $X$ is a second countable topological space, then $\mu$-almost every point has a dense orbit in $X$.
\end{enumerate}
\end{thm}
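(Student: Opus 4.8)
The plan is to derive both statements from a single recurrence principle: \emph{if $\{T_n\}_{n\in\mathbb{N}}$ admits an ergodic subsequence, then for every $B\in\mathcal{A}$ with $\mu(B)>0$ the set}
\[
E_B \defeq \{\, x\in X : T_n(x)\in B \text{ for infinitely many } n\,\}
\]
\emph{has full measure.} Granting this, part (i) is immediate: given $A\in\mathcal{A}$, if $\mu(A)=0$ the recurrence statement is vacuous, and if $\mu(A)>0$ then $\mu(E_A)=1$, so $T_n(x)\in A$ for infinitely many $n$ at $\mu$-almost every $x\in A$. For part (ii) I would use second countability to fix a countable basis $\{U_j\}_{j\ge 1}$ of \emph{nonempty} open sets; since $\mathrm{supp}(\mu)=X$, each $U_j$ has $\mu(U_j)>0$, so $\mu(E_{U_j})=1$, and hence $E\defeq\bigcap_{j\ge 1}E_{U_j}$ has full measure. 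For $x\in E$ and any nonempty open $V\subset X$ there is some $j$ with $U_j\subset V$, whence $T_n(x)\in U_j\subset V$ for infinitely many $n$; thus the orbit $\{T_n(x)\}_{n\in\mathbb{N}}$ is dense in $X$.

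To prove the principle, let $\{T_{n_k}\}_{k\in\mathbb{N}}$ be an ergodic subsequence and fix $B$ with $\mu(B)>0$. Applying Definition \ref{def:mixergod} to $\varphi=\one_B\in L^2(\mu)$ gives
\[
\lim_{K\to\infty}\,\norm{\frac1K\sum_{k=1}^K \one_B\circ T_{n_k} - \mu(B)}_2 = 0 .
\]
Since $L^2$-convergence forces almost everywhere convergence along a subsequence, there is $\{K_j\}$ increasing such that for $\mu$-almost every $x$ one has $\frac{1}{K_j}\sum_{k=1}^{K_j}\one_B(T_{n_k}(x))\to\mu(B)>0$. If $T_{n_k}(x)\in B$ held for only finitely many $k$, these averages would tend to $0$; hence for $\mu$-almost every $x$ the orbit $\{T_{n_k}(x)\}$ (and a fortiori $\{T_n(x)\}$) meets $B$ infinitely often, i.e.\ $\mu(E_B)=1$.

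I do not expect a serious obstacle here — this is essentially the non-autonomous analogue of deducing Poincar\'e recurrence from ergodicity, with the crucial simplification that the Berend--Bergelson definition is phrased directly in terms of $L^2$-convergence of Cesàro averages, so that no pointwise ergodic theorem is needed. The only points requiring a little care are: (a) that $L^2$-convergence of the averages yields the desired pointwise conclusion only \emph{along} a subsequence of indices $K$, which is nonetheless enough to rule out finitely many returns; and (b) in part (ii), combining the support hypothesis with second countability so that one single null set is discarded to handle \emph{all} nonempty open sets simultaneously. I would also remark, as the statement itself notes, that recurrence can genuinely fail for non-autonomous measure-preserving systems, so the ergodic-subsequence hypothesis is doing real work and cannot simply be dropped.
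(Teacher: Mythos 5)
Your proposal is correct and follows essentially the same route as the paper: apply the $L^2$-ergodic average to an indicator function, pass to an almost-everywhere convergent subsequence of the Ces\`aro averages, and observe that finitely many returns would force those averages to zero, contradicting $\mu(B)>0$; part (ii) is then the same intersection over a countable basis of positive-measure open sets. The only cosmetic difference is that you package both parts under a single ``full-measure infinite-return'' principle, whereas the paper argues part (ii) via the complementary null sets $\tilde U_k$.
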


For other results on recurrence and topological transitivity of boundary extensions of inner functions and compositions thereof, see \cite{DM91} and \cite{BEFRS22}.

The paper is organized as follows. In Section \ref{sec:prelims} we collect some standard definitions and classical results which will be used later. Section \ref{sec:general} is devoted to the proof of Theorem \ref{thm:ergodic}. In Section \ref{sec:further} we present one necessary and one sufficient condition for ergodicity, and use them to prove Theorem \ref{thm:easieriff} and Corollary \ref{cor:mix}. Section \ref{sec:usual} is devoted to prove the converse of Pommerenke's result on mixing sequences of inner functions. Section \ref{sec:examples} contains some relevant examples and the proof of Corollary \ref{cor:mix1}. Finally, Theorem \ref{prop:rec} is proved in Section \ref{sec:rec}.

\section{Preliminaries}
\label{sec:prelims}
\subsection{Inner functions and the space $L^2 (\partial \DD)$ }
\label{ssec:L2}
Let $L^2(\partial \DD)$ be the usual Hilbert space of measurable complex-valued functions $\varphi\colon\partial\DD\to\Cx$ such that
\[ \norm{\varphi}_2 \defeq \left(\int_{\partial\DD} |\varphi|^2\,dm \right)^{1/2} < +\infty, \]
armed with the corresponding inner product
\[ \ip{\varphi}{\psi} \defeq \int_{\partial\DD} \varphi\cdot\overline{\psi}\,d m = \int_0^{2 \pi} \varphi (e^{i\theta}) \cdot\overline{\psi (e^{i\theta})}\,\frac{d \theta}{2 \pi }. 
\]
Finite linear combinations of the trigonometric monomials
\[ e_n(e^{i\theta}) \defeq e^{i n \theta},\quad n\in\mathbb{Z}, \]
are dense in $L^2(\partial\DD)$. The boundary extension $\hat g$ of an inner function $g\colon\DD\to\DD$ satisfies $|\hat g (\xi)| = 1$ for almost every $\xi \in \partial \DD$. Cauchy's Formula tells us that the Fourier coefficients of $\hat g$ are precisely the coefficients of the power series expansion of $g$ at the origin, that is,
\begin{equation}\label{eq:riesz}
    \ip{\hat g}{e_n} = \int_0^{2 \pi} \hat g(e^{i\theta}) e^{-ni\theta}\, \frac{d \theta}{2 \pi} = \begin{cases}
        0, & n < 0, \\
        \frac{g^{(n)}(0)}{n!}, & n \geq 0.
    \end{cases}
\end{equation}

Let $\omega_z$ denote the harmonic measure on $\partial\DD$ with respect to the point $z\in\DD$, defined as 
\[
\omega_z (E) = \int_E \frac{1-|z|^2}{|\xi - z|^2} dm (\xi) , \quad E  \subset \partial \DD . 
\]
We recall the following classical result (see \cite[Corollary 1.5]{DM91}):
\begin{lemma}[Lowner's Lemma]
\label{lem:innerharmonic}
Let $g\colon\DD\to\DD$ be an inner function. Then for any $z\in\DD$,
\[  \omega_{g(z)} (E) = \omega_z (g^{-1}(E)), \quad E \subset \partial \DD . \]
In particular, if $g(0)=0$ and $\varphi$ is an integrable function on $\partial \DD$, we have
\[
\int_{\partial \DD} (\varphi \circ g ) dm = \int_{\partial \DD} \varphi dm . 
\]
\end{lemma}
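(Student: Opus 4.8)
The plan is to prove the transformation rule for harmonic measure first and then deduce the integral identity as a special case. Fix $z \in \DD$ and regard both sides of the claimed equality as Borel measures on $\partial\DD$: on the left, $\mu \defeq \omega_{g(z)}$, and on the right, the push-forward $\nu \defeq (\hat g)_* \omega_z$, which by definition satisfies $\nu(E) = \omega_z(\hat g^{-1}(E)) = \omega_z(g^{-1}(E))$. Since a Borel measure on $\partial\DD$ is determined by its action on continuous functions (uniqueness in the Riesz representation theorem), it suffices to establish
\[
\int_{\partial\DD} u\, d\omega_{g(z)} = \int_{\partial\DD} (u \circ \hat g)\, d\omega_z
\]
for every continuous $u\colon \partial\DD \to \Cx$.

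To this end I would pass to harmonic extensions. Let $h$ denote the Poisson integral of $u$, so that $h$ is harmonic in $\DD$, continuous on $\overline\DD$ with $h|_{\partial\DD} = u$, and $\int u\, d\omega_w = h(w)$ for every $w \in \DD$. The left-hand side above is then simply $h(g(z))$. Now set $H \defeq h \circ g$. Since $g$ is analytic and $h$ is harmonic, $H$ is harmonic in $\DD$, and it is bounded because $|H| \le \max_{\partial\DD}|u|$. The crux is to identify the boundary values of $H$: for $m$-almost every $\xi \in \partial\DD$ the inner function $g$ has a nontangential limit $\hat g(\xi)$ of modulus one, and since $h$ is continuous on $\overline\DD$, letting $z \to \xi$ nontangentially gives $H(z) = h(g(z)) \to h(\hat g(\xi)) = u(\hat g(\xi))$. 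Thus $H$ is a bounded harmonic function whose nontangential boundary values coincide a.e. with $u \circ \hat g$.

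By the Poisson representation of bounded harmonic functions, $H$ equals the Poisson integral of its boundary data, that is, $H(z) = \int (u \circ \hat g)\, d\omega_z$. Comparing with $H(z) = h(g(z))$ yields the displayed identity, and hence $\mu = \nu$, which is the first assertion. For the ``in particular'' statement, set $g(0) = 0$; since the Poisson kernel at the origin is identically one, $\omega_0 = m$, and therefore $\omega_{g(0)} = \omega_0 = m$. Taking $z = 0$ and $E$ arbitrary gives $m(E) = m(g^{-1}(E))$, so $\hat g$ preserves $m$. The identity $\int (\varphi \circ g)\, dm = \int \varphi\, dm$ then holds for indicator functions, extends to simple functions by linearity, and to arbitrary $\varphi \in L^1(m)$ by the usual dominated convergence approximation.

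The main obstacle is the boundary-value step: one must invoke Fatou's theorem to guarantee that the nontangential limit $\hat g(\xi)$ exists and lies on $\partial\DD$ for almost every $\xi$, and then justify interchanging this limit with the continuous extension $h$. This is precisely where the hypothesis that $g$ is \emph{inner}, rather than merely a bounded analytic self-map, is essential, since it ensures the limit lands on the boundary, where the value is governed by $u$ itself and not just by $h$. The uniqueness of the Poisson representation for bounded harmonic functions is the other standard ingredient. A slicker but less self-contained alternative for the second assertion alone is to test against the monomials $e_n$ and use that $g^n(0) = 0$ for $n \ge 1$ together with the mean value property, matching the Fourier coefficients computed in \eqref{eq:riesz}.
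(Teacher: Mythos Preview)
Your argument is correct and is in fact the standard proof of Lowner's Lemma via Poisson extensions: reduce to continuous test functions, extend harmonically, compose, and invoke the Poisson representation of bounded harmonic functions together with Fatou's theorem and the inner hypothesis to identify boundary values. The deduction of the integral identity from $\omega_0 = m$ is also handled correctly.

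There is nothing to compare against, however: the paper does not supply its own proof of this lemma. It is stated as a classical result with a reference to \cite[Corollary~1.5]{DM91} and used as a black box thereafter. Your write-up would serve as a self-contained justification if one wished to include it; the final remark about testing against the monomials $e_n$ is a nice shortcut for the $g(0)=0$ case and dovetails with the Fourier-coefficient computation \eqref{eq:riesz} already present in the paper.
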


\subsection{Equidistribution of sequences on $\partial\DD$}
This is a much-studied notion with many connections. We recall the basic definition:
\begin{defi}
We say that a sequence $\{z_n\}_{n\in\mathbb{N}}\subset\partial\DD$ is equidistributed on $\partial\DD$ if for any arc $S\subset\partial\DD$, we have
\[ \lim_{n\to+\infty} \frac{\#(\{z_1, z_2, \ldots, z_n\}\cap S)}{n} = m (S). \]
\end{defi}

The following classical characterisation, due to Weyl (see, for instance, \cite[Theorem 2.1]{KN74}) will allow us to discuss equidistribution in our setting. 
\begin{lemma}[Weyl's Criterion]
A sequence $\{z_n\}_{n\in\mathbb{N}}\subset \partial\DD$ is equidistributed in $\partial\DD$ if and only if
\[ \lim_{N\to+\infty} \frac{1}{N}\sum_{n=1}^N (z_n)^{\ell} = 0,   \]
for any integer $\ell \geq 1$.
\end{lemma}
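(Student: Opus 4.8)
The plan is to route everything through the dense family $C(\partial\DD)$ of continuous functions, using the elementary fact that averaging against an arc indicator can be squeezed between averaging against two nearby continuous functions. Throughout, write $A_N(f)\defeq\frac1N\sum_{n=1}^N f(z_n)$ for $f\in C(\partial\DD)$, so that (with the standard convention of counting with multiplicity) $\frac{\#(\{z_1,\dots,z_N\}\cap S)}{N}=A_N(\one_S)$, and recall that $\int_{\partial\DD}e_\ell\,dm=0$ for every $\ell\neq0$. The key intermediate statement, equivalent to both conditions in the lemma, is: $A_N(f)\to\int_{\partial\DD}f\,dm$ for every $f\in C(\partial\DD)$.

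For the forward direction I would assume equidistribution, so that $A_N(\one_S)\to m(S)=\int\one_S\,dm$ for every arc $S$; by linearity this extends to all step functions, and then, since every $f\in C(\partial\DD)$ is a uniform limit of step functions and $|A_N(f)-A_N(h)|\le\norm{f-h}_\infty$ uniformly in $N$, a standard three-term estimate gives $A_N(f)\to\int f\,dm$ for all continuous $f$. Specialising to $f=e_\ell$ yields the Weyl condition for $\ell\ge1$ (and, taking complex conjugates, for all $\ell\neq0$).

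For the converse I would start from $A_N(e_\ell)\to0$ for all $\ell\ge1$; conjugating and using $|z_n|=1$ gives $A_N(e_{-\ell})=\overline{A_N(e_\ell)}\to0$, while $A_N(e_0)=1=\int e_0\,dm$ trivially, so $A_N(p)\to\int p\,dm$ for every trigonometric polynomial $p$. Since trigonometric polynomials are uniformly dense in $C(\partial\DD)$ (Stone--Weierstrass, or concretely Fej\'er's theorem), the same three-term estimate upgrades this to $A_N(f)\to\int f\,dm$ for all $f\in C(\partial\DD)$. To recover equidistribution, fix an arc $S$ and $\varepsilon>0$, pick $f_-,f_+\in C(\partial\DD)$ with $f_-\le\one_S\le f_+$ and $\int(f_+-f_-)\,dm<\varepsilon$ (possible since the at most two endpoints of $S$ carry no mass), and sandwich:
\[ m(S)-\varepsilon<\int f_-\,dm\le\liminf_N A_N(\one_S)\le\limsup_N A_N(\one_S)\le\int f_+\,dm<m(S)+\varepsilon. \]
Letting $\varepsilon\to0$ gives $A_N(\one_S)\to m(S)$, which is equidistribution.

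I do not expect a genuine obstacle here: the proof is classical and uses only two standard approximation inputs --- uniform density of step functions, respectively of trigonometric polynomials, in $C(\partial\DD)$, and the one-sided continuous approximation of an arc indicator with controlled integral --- so the ``hard part'' is merely assembling these cleanly. If preferred, one could instead simply invoke \cite[Theorem 2.1]{KN74}.
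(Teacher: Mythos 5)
Your proof is correct and is the classical argument for Weyl's criterion (approximation of arc indicators by continuous functions in one direction, and of continuous functions by trigonometric polynomials in the other); the paper itself gives no proof, simply citing \cite[Theorem 2.1]{KN74}, where essentially this same argument appears. Nothing to add.
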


\section{A general characterisation of ergodicity}\label{sec:general}
In this section, we prove Theorem \ref{thm:ergodic}. We start with the following elementary lemma.
\begin{lemma}\label{lem:derivative}
Let $g\colon\DD\to\DD$ be holomorphic with $g(0) = 0$. Then, for all $n\in\mathbb{N}$,
\[ \left(g^n\right)^{(n)}(0) = n!\left(g'(0)\right)^n. \]
\end{lemma}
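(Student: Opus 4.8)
The plan is to prove the identity $\left(g^n\right)^{(n)}(0) = n!\left(g'(0)\right)^n$ by induction on $n$, tracking only the lowest-order term of the power series expansion at the origin. Write $a \defeq g'(0)$, so that $g(z) = az + O(z^2)$ near $0$; the key observation is that composing functions that vanish at the origin multiplies their lowest-order behaviour. More precisely, I would establish the auxiliary claim that for every $n \geq 1$ the composition $g^n$ has a power series expansion of the form $g^n(z) = a^n z + O(z^2)$, i.e. its first nonzero coefficient (apart from the constant term, which is $0$ since $g(0)=0$ implies $g^n(0)=0$) is $a^n$ in degree $1$. Once this is in hand, the conclusion is immediate: by Taylor's theorem the degree-$1$ coefficient of $g^n$ equals $\left(g^n\right)'(0)$, and the degree-$n$ coefficient equals $\left(g^n\right)^{(n)}(0)/n!$; but the claim is about degree $1$, so I actually need a slightly different bookkeeping — see below.

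Let me instead phrase the induction so that it directly yields the degree-$n$ coefficient. For $n=1$ the statement reads $g'(0) = g'(0)$, which is trivial. Assume the result for $n$, so that $g^n(z) = \left(g'(0)\right)^n z^n + (\text{higher order terms in } z)$, where "higher order" means terms of degree $\geq n+1$; equivalently $g^n$ vanishes to order exactly $n$ at $0$ with leading coefficient $\left(g'(0)\right)^n$ (the lower-degree coefficients $1,\dots,n-1$ all vanish, which one checks simultaneously by the same induction, using that $g(0)=0$ forces $g^k$ to vanish at $0$ for all $k$ — in fact one shows $g^n$ vanishes to order $\geq n$). Then $g^{n+1} = g \circ g^n$, and since $g(w) = g'(0) w + O(w^2)$ with $w = g^n(z) = \left(g'(0)\right)^n z^n + O(z^{n+1})$, substituting gives
\[
g^{n+1}(z) = g'(0)\cdot\left(\left(g'(0)\right)^n z^n + O(z^{n+1})\right) + O\!\left(\left(g^n(z)\right)^2\right) = \left(g'(0)\right)^{n+1} z^n + O(z^{n+1}).
\]
Wait — this produces the coefficient of $z^n$ in $g^{n+1}$, not of $z^{n+1}$; the term $O\!\left((g^n(z))^2\right) = O(z^{2n})$ is negligible for $n \geq 1$ since $2n \geq n+1$, but I have shown $\left(g^{n+1}\right)^{(n)}(0) \neq (n+1)!(g'(0))^{n+1}$ in general. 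The resolution is that the correct statement to carry through the induction concerns the coefficient of $z^n$ in $g^n$, and the inductive step passes from $g^n$ (order $n$, leading coefficient $a^n$) to $g^{n+1} = g^n \circ g$: here the inner function is $g(z) = az + O(z^2)$, so $g^{n+1}(z) = a^n (az + O(z^2))^n + O(z^{n+1}) = a^{n}\cdot a^n z^n + \dots$ — that is not right either. The clean approach is $g^{n+1} = g \circ g^n$ with the induction hypothesis applied to $g^n$ in degree $n$ AND the expansion of $g^{n+1}$ sought in degree $n+1$ being assembled from $g(w)$, $w = g^n(z)$: the degree-$(n+1)$ coefficient of $g^{n+1}$ genuinely requires the degree-$2$ coefficient of $g$ and the degree-$(n+1)$ coefficient of $g^n$, so the bald statement as printed is false unless $g^n$ is replaced by the right object.

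In view of this, the honest plan is: the intended identity must be $\left(g^n\right)'(0) = \left(g'(0)\right)^n$ hidden inside a statement about the $n$-th iterate evaluated via the $n$-th derivative through the chain/Faà di Bruno formula, where all mixed terms involve a product of at least two derivatives of composed copies each evaluated at $0$, and since the relevant intermediate points are all $g^k(0) = 0$, Faà di Bruno's formula for $\left(g^n\right)^{(n)}(0)$ collapses: the only partition of $n$ contributing is the partition into $n$ parts of size $1$, whose coefficient is $n!/(1!)^n/n! \cdot n! = n!$ multiplied by $\prod_{k} g'(0) = (g'(0))^n$ — all coarser partitions force a factor $g^{(j)}(0)$ with $j \geq 2$ times something, but crucially they also force evaluating a derivative of $g^{m}$ of order strictly less than its vanishing order, hence zero. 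I would therefore write the proof as: (1) by induction, $g^n$ vanishes to order exactly $n$ at the origin with $\left(g^n\right)^{(n)}(0)/n! = \left(g'(0)\right)^n$; the inductive step writes $g^{n+1} = g\circ g^n$, notes $g^{n+1}$ vanishes to order $\geq n+1$ (composition of order-$1$ with order-$n$), hence its degree-$(n+1)$ coefficient is extracted from $g'(0)$ times the degree-$(n+1)$ coefficient of $g^n$ plus higher-order cross terms $O(z^{2n})$ which vanish to order $> n+1$; and the degree-$(n+1)$ coefficient of $g^n$ is, by Taylor, $\left(g^n\right)^{(n+1)}(0)/(n+1)!$, which when multiplied by $g'(0)$ and combined with the fact that $g^n$ already has leading term $(g'(0))^n z^n$ contributing $g'(0)\cdot$(that) only at degree $n$... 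The main obstacle, clearly, is getting the bookkeeping of degrees right so the cross terms are genuinely negligible; the cleanest resolution is to prove simultaneously by induction the pair of facts "$g^n(z) = (g'(0))^n z^n + O(z^{n+1})$", which is exactly equivalent to the claimed derivative identity together with the vanishing of all lower coefficients, and for which the inductive step $g^{n+1} = g \circ g^n$ with $g(w) = g'(0)w + O(w^2)$ gives $g^{n+1}(z) = g'(0)(g'(0))^n z^n + O(z^{n+1}) + O(z^{2n})$ — and since we also know inductively that the degree-$n$ coefficient is the full story up to $O(z^{n+1})$, the degree-$(n+1)$ coefficient of $g^{n+1}$ equals $g'(0)$ times the degree-$(n+1)$ coefficient of $g^n$ plus a contribution from the degree-$2$ part of $g$ applied to the degree-$n$ leading term, i.e. $(g^{(2)}(0)/2)\cdot((g'(0))^n)^2 \cdot z^{2n}$ — again $O(z^{2n})$, negligible for $n \geq 1$. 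Hence $\left(g^{n+1}\right)^{(n+1)}(0)/(n+1)! = g'(0)\cdot \left(g^n\right)^{(n+1)}(0)/(n+1)!$, but we want this to equal $(g'(0))^{n+1}$, which forces $\left(g^n\right)^{(n+1)}(0)/(n+1)! = (g'(0))^n$ — and this is NOT the induction hypothesis. I conclude the proof genuinely needs the two-index statement "the degree-$j$ coefficient of $g^n$ for $j < n$ is $0$ and for $j = n$ is $(g'(0))^n$", proved by induction on $n$ for all $j$ at once, and then the printed identity is the $j=n$ case; I expect the author's proof to do exactly this short induction, and the main (entirely routine) obstacle is simply organising the composition $g\circ g^n$ so that only the linear term of the outer $g$ interacts with the leading term of $g^n$ at the relevant degree.
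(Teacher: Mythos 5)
There is a genuine gap here, and it stems from a misreading of the notation: in this lemma $g^n$ denotes the $n$-th \emph{power} $z\mapsto g(z)^n$, not the $n$-th iterate $g\circ\cdots\circ g$. This reading is forced by how the lemma is used in Lemma \ref{lem:cauchy}, where it is applied to $(G_m^n)^\ell$, i.e.\ to the $\ell$-th power of $G_m^n$ arising from $e_\ell\circ \widehat G_m^n$ with $e_\ell(\xi)=\xi^\ell$. Under the power reading the proof is the one-line computation the paper gives: writing $g(z)=g'(0)z+O(|z|^2)$, one has $g(z)^n=(g'(0))^n z^n+O(|z|^{n+1})$, and uniqueness of the Taylor expansion yields $(g^n)^{(n)}(0)=n!\,(g'(0))^n$. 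Your ``two-index statement'' (the degree-$j$ coefficient of $g^n$ vanishes for $j<n$ and equals $(g'(0))^n$ for $j=n$) is exactly this expansion, but it propagates through the recursion $g^{n+1}(z)=g(z)\cdot g(z)^n$, not through $g^{n+1}=g\circ g^n$.

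Under your iterate reading the statement is simply false, which is why every version of your induction refuses to close. Concretely, take $g(z)=z(z+a)/(1+az)$ with $0<a<1$ (the Blaschke products of Proposition \ref{thm:counterexample}); then $g(z)=az+(1-a^2)z^2+O(z^3)$ and $g(g(z))=a^2z+a(1-a^2)(1+a)z^2+O(z^3)$, so $(g\circ g)''(0)/2!=a(1-a^2)(1+a)\neq a^2$ in general (for $a=1/2$ this is $9/16\neq 1/4$). Moreover, your auxiliary claim that the $n$-th iterate vanishes to order $n$ at the origin fails whenever $g'(0)\neq 0$, since the iterate has $(g'(0))^n\neq 0$ as its degree-$1$ coefficient. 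You correctly sensed that ``the bald statement as printed is false unless $g^n$ is replaced by the right object,'' but the right object is the power; once that is recognised, no induction, Fa\`a di Bruno formula, or two-index bookkeeping is needed, and the paper's proof is indeed just the single displayed expansion above.
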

\begin{proof}
For $n>0$, we use the notation $O(|z|^n)$ to denote a function $h$ defined on $\DD$ for which there exists a constant $C=C(h,n)>0$  such that $|h(z)| \leq C |z|^n$ for any $|z|< 1/2$.  Since $g(z) = g' (0) z + O(|z|^2)$, we have $g(z)^n = g'(0)^n z^n + O(|z|^{n+1}) $ and the result follows by uniqueness of the Taylor series.

\end{proof}

Next we show that suitable inner products on the circle can be written as derivatives at the origin. As before, for $ \ell \in \mathbb{Z} $, let $e_\ell$ denote the monomial $e_\ell (\xi) = {\xi}^\ell$, $\xi \in \partial \DD$.
\begin{lemma}\label{lem:cauchy}
Let $g_n$, $n \geq 1$,  be inner functions fixing the origin and $G_n$, $G_m^n$ be given by \eqref{notation}. Then, for any integers $n>m \geq 0$ and $\ell\in\mathbb{N}$,
\[ \ip{e_\ell\circ \widehat G_n}{e_\ell\circ \widehat G_m} = \left((G_m^n)'(0)\right)^\ell. \]
\end{lemma}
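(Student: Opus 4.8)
The plan is to reduce the inner product on $\partial\DD$ to a single Cauchy--Taylor coefficient of an inner function, and then invoke the elementary Lemma~\ref{lem:derivative}. Two observations set things up. First, since $e_\ell(\xi)=\xi^\ell$, for any inner $g$ fixing the origin one has $e_\ell\circ\widehat g=\widehat{g^\ell}$ a.e., where $g^\ell$ denotes the $\ell$-th power of $g$; this $g^\ell$ is again inner, holomorphic on $\DD$, and vanishes at $0$. Second, from the holomorphic identity $G_n=G_m^n\circ G_m$ together with the fact that $G_m^n$ and $G_m$ are inner, one has the a.e.\ identity of boundary extensions $\widehat G_n=\widehat{G_m^n}\circ\widehat G_m$ on $\partial\DD$.

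Granting these, I would compute
\[
\ip{e_\ell\circ\widehat G_n}{e_\ell\circ\widehat G_m}=\int_{\partial\DD}\widehat G_n(\xi)^\ell\,\overline{\widehat G_m(\xi)^\ell}\,dm(\xi)=\int_{\partial\DD}\bigl(\Phi\circ\widehat G_m\bigr)(\xi)\,dm(\xi),
\]
where $\Phi(\eta)\defeq\widehat{G_m^n}(\eta)^\ell\,\eta^{-\ell}$: indeed, on $\partial\DD$ one has $|\widehat G_m|=1$ a.e., so $\overline{\widehat G_m(\xi)^\ell}=\widehat G_m(\xi)^{-\ell}$, while $\widehat G_n(\xi)^\ell=\widehat{G_m^n}(\widehat G_m(\xi))^\ell$ a.e.\ by the composition identity, which makes the two integrands coincide. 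Since $|\widehat{G_m^n}|=1$ a.e., $\Phi\in L^\infty(\partial\DD)\subset L^1(\partial\DD)$, so the change-of-variables part of Lowner's Lemma (Lemma~\ref{lem:innerharmonic}), applied to the inner function $G_m$ fixing $0$, gives
\[
\ip{e_\ell\circ\widehat G_n}{e_\ell\circ\widehat G_m}=\int_{\partial\DD}\Phi\,dm=\int_{\partial\DD}\widehat{G_m^n}(\eta)^\ell\,\eta^{-\ell}\,dm(\eta)=\ip{\widehat{(G_m^n)^\ell}}{e_\ell}.
\]
Finally $(G_m^n)^\ell$ is holomorphic on $\DD$ with $(G_m^n)^\ell(0)=0$, so \eqref{eq:riesz} identifies this last quantity with $\tfrac{1}{\ell!}\bigl((G_m^n)^\ell\bigr)^{(\ell)}(0)$, which by Lemma~\ref{lem:derivative} (applied with $g=G_m^n$ and the exponent $\ell$) equals $\bigl((G_m^n)'(0)\bigr)^\ell$. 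This is the asserted identity; it also covers $m=0$, in which case $G_0=\widehat G_0=\mathrm{id}$ and the composition step is vacuous.

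The only point that really needs care is the a.e.\ identity $\widehat G_n=\widehat{G_m^n}\circ\widehat G_m$ — that passing to boundary values commutes with composition of inner functions. This is where the hypothesis $g_k(0)=0$, hence $G_m(0)=0$, enters: by Lowner's Lemma, $m(\widehat G_m^{-1}(E))=\omega_0(\widehat G_m^{-1}(E))=\omega_{G_m(0)}(E)=m(E)$ for measurable $E\subset\partial\DD$, so the null set on which $\widehat{G_m^n}$ is undefined or fails to have modulus one pulls back under $\widehat G_m$ to a null set, and the standard pointwise composition formula for boundary values of inner functions then applies. (Alternatively, one can sidestep this identity: the ratio $h\defeq G_n^\ell/G_m^\ell=\bigl((G_m^n\circ G_m)/G_m\bigr)^\ell$ is, by Schwarz's Lemma applied to $G_m^n$, a bounded holomorphic — in fact inner — function on $\DD$, and then $\ip{e_\ell\circ\widehat G_n}{e_\ell\circ\widehat G_m}=\ip{\widehat h}{e_0}=h(0)=\bigl((G_m^n)'(0)\bigr)^\ell$.) Everything else is routine bookkeeping with Fourier coefficients.
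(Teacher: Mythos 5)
Your proof is correct and follows essentially the same route as the paper: reduce the inner product via Lowner's Lemma to $\langle e_\ell\circ \widehat G_m^n, e_\ell\rangle$, read this off as the $\ell$-th Taylor coefficient of $(G_m^n)^\ell$ via \eqref{eq:riesz}, and apply Lemma~\ref{lem:derivative}; you simply make explicit the boundary composition identity and change of variables that the paper compresses into a single citation of Lemma~\ref{lem:innerharmonic}. Your parenthetical alternative via the inner function $h = G_n^\ell/G_m^\ell$ and the mean value property is a clean shortcut, but the main argument is the paper's.
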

\begin{proof}
Fix the integers $n>m \geq 0$ and $\ell\in\mathbb{N}$. By Lemma \ref{lem:innerharmonic}, we have
\[ \ip{e_\ell\circ \widehat G_n}{e_\ell\circ \widehat G_m} = \ip{e_\ell\circ \widehat G_m^n}{e_\ell}. \]
The right-hand side can be read as the $\ell$-th Fourier coefficient of the boundary map of the holomorphic self-map of $\DD$ given by $z\mapsto \left(G_m^n(z)\right)^\ell$, and so by (\ref{eq:riesz}) we get
\[ \ip{e_\ell\circ \widehat G_n}{e_\ell\circ \widehat G_m} = \frac{\left((G_m^n)^\ell\right)^{(\ell)}(0)}{\ell!}. \]
The conclusion follows by applying Lemma \ref{lem:derivative} to the right-hand side.
\end{proof}

We are ready to prove Theorem \ref{thm:ergodic}.
\begin{proof}[Proof of Theorem \ref{thm:ergodic}]
Since trigonometric polynomials are dense in $L^2 (\partial \DD)$, it is sufficient to show that
\[
\lim_{N \to \infty} \norm{\frac{1}{N}\sum_{n=1}^N e_\ell\circ \widehat G_n}_2^2 = 0, \quad \ell \in \mathbb{Z} \setminus \{0\} . 
\]
Since $e_{ - \ell} = \overline{e_\ell}$, we can assume $\ell \geq 1$. Fix $\ell \geq 1$. We have
\[ \norm{\frac{1}{N}\sum_{n=1}^N e_\ell\circ \widehat G_n}_2^2 =
\frac{1}{N^2}\sum_{n=1}^N \norm{e_\ell\circ \widehat G_n}_2^2 + 2\Re\left(\frac{1}{N^2}\sum_{m=1}^{N-1}\sum_{n=m+1}^N \ip{e_\ell\circ \widehat G_n}{e_\ell\circ \widehat G_m}\right).
\]
Since $|e_\ell\circ \widehat G_n| = 1$ $m$-almost everywhere on $\partial \DD$ for any $n \geq 1$, we obtain 
\[ \norm{\frac{1}{N}\sum_{n=1}^N e_\ell\circ \widehat G_n}_2^2 = \frac{1}{N} + 2\Re\left(\frac{1}{N^2}\sum_{m=1}^{N-1} \sum_{n=m+1}^N \ip{e_\ell\circ \widehat G_n}{e_\ell\circ \widehat G_m}\right). \]
Applying Lemma \ref{lem:cauchy}, we have $\ip{e_\ell\circ \widehat G_n}{e_\ell\circ \widehat G_m} = \left((G_m^n)'(0)\right)^\ell$, which finishes the proof.
\end{proof}

\section{Ergodicity in different scenarios}\label{sec:further}
The proof of Theorem \ref{thm:easieriff} is based on the following result.
\begin{thm}\label{thm:necsuf}
With the same notation as Theorem \ref{thm:ergodic}, the following hold:
\begin{enumerate}[(a)]
    \item The sequence $\{ \widehat G_n \}_{n\in\mathbb{N}}$ is ergodic if, for all $\ell\in\mathbb{N}$,
    \begin{equation}
    \label{eq:suf}
    \lim_{N\to+\infty} \frac{1}{N}\sum_{m=1}^{N-1} \left((G_m^N)'(0)\right)^\ell = 0.
    \end{equation}

    \item If the sequence $\{ \widehat G_n \}_{n\in\mathbb{N}}$ is ergodic, then
\begin{equation}
    \label{eq:nec}
    \lim_{N\to+\infty} \frac{1}{N} \sum_{n=m+1}^N \left((G_m^n)'(0)\right)^\ell = 0,  
    \end{equation}
    for every pair of integers $m \geq 0$ and $\ell \geq 1$.
\end{enumerate}
\end{thm}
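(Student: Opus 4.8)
The plan is to derive both halves from the characterisation of ergodicity in Theorem~\ref{thm:ergodic} together with Lemma~\ref{lem:cauchy}. Throughout I write $c_m^n \defeq (G_m^n)'(0)$; by the chain rule (each $g_k$ fixes the origin) we have $c_m^n = \prod_{k=m+1}^n g_k'(0)$, so that $|c_m^n|\le 1$ and $c_p^n = c_p^q c_q^n$ whenever $m\le p<q<n$.

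\emph{Part (a).} Fix $\ell\in\mathbb{N}$ and set $S_n \defeq \sum_{m=1}^{n-1}(c_m^n)^\ell$, so that hypothesis \eqref{eq:suf} reads $S_N/N\to 0$. Grouping the pairs $1\le m<n\le N$ occurring in \eqref{eq:ergodiccondition} according to the value of $n$ gives
\[
\sum_{m=1}^{N-1}\sum_{n=m+1}^N (c_m^n)^\ell \;=\; \sum_{n=2}^N S_n ,
\]
so it suffices to show $N^{-2}\sum_{n=2}^N S_n\to 0$. This is a routine Ces\`aro-type estimate: using the trivial bound $|S_n|\le n$, one splits the sum at an index $N_0$ beyond which $|S_n|/n<\varepsilon$; the head contributes $O(N_0^2/N^2)\to 0$ and the tail is bounded by $\varepsilon$. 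Hence \eqref{eq:ergodiccondition} holds for every $\ell$ — in fact the full complex average, and not merely its real part, tends to $0$ — and Theorem~\ref{thm:ergodic} yields ergodicity.

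\emph{Part (b).} Fix integers $m\ge 0$ and $\ell\ge 1$. By Lemma~\ref{lem:cauchy}, $(c_m^n)^\ell = \ip{e_\ell\circ\widehat G_n}{e_\ell\circ\widehat G_m}$, and therefore
\[
\frac{1}{N}\sum_{n=m+1}^N (c_m^n)^\ell \;=\; \ip{\frac{1}{N}\sum_{n=m+1}^N e_\ell\circ\widehat G_n}{e_\ell\circ\widehat G_m}.
\]
Applying the definition of ergodicity to $\varphi=e_\ell$ (and recalling $\int_{\partial\DD} e_\ell\,dm=0$ since $\ell\ge 1$) gives $\frac1N\sum_{n=1}^N e_\ell\circ\widehat G_n\to 0$ in $L^2(\partial\DD)$; deleting the first $m$ summands alters this average by at most $m/N$ in norm, so $\frac1N\sum_{n=m+1}^N e_\ell\circ\widehat G_n\to 0$ in $L^2(\partial\DD)$ as well. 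Since $|e_\ell\circ\widehat G_m|=1$ almost everywhere, Cauchy--Schwarz forces the inner product above to $0$, which is exactly \eqref{eq:nec}.

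Neither step is especially deep. The only mildly technical point is the Ces\`aro estimate in part (a); the one genuinely useful idea is to recognise, via Lemma~\ref{lem:cauchy}, that the single average in \eqref{eq:nec} is an inner product against the \emph{fixed} unit vector $e_\ell\circ\widehat G_m$, so that it is controlled directly by the $L^2$ convergence built into Definition~\ref{def:mixergod}.
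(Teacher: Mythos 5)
Your proposal is correct and follows essentially the same route as the paper: part (a) reduces condition \eqref{eq:ergodiccondition} to a Ces\`aro average of the quantities in \eqref{eq:suf} by regrouping the double sum according to $n$ and bounding the real part by the modulus, and part (b) uses Lemma \ref{lem:cauchy} to read the average in \eqref{eq:nec} as an inner product against the fixed unit vector $e_\ell\circ\widehat G_m$, which Cauchy--Schwarz controls by the $L^2$ norm appearing in Definition \ref{def:mixergod}. The only cosmetic difference is that you discard the first $m$ summands before taking the inner product, whereas the paper keeps the full average and absorbs those $m$ terms afterwards; the estimates are identical.
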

\begin{proof}[Proof of Theorem \ref{thm:necsuf}]
We will prove (a) using Theorem \ref{thm:ergodic}. We start by noting that
\[ \Re\left(\frac{1}{N^2}\sum_{m=1}^{N-1} \sum_{n = m + 1}^N \left((G_m^n)'(0)\right)^\ell\right) \leq \left|\frac{1}{N^2}\sum_{m=1}^{N-1} \sum_{n = m + 1}^N \left((G_m^n)'(0)\right)^\ell\right|. \]
We rewrite the sum on the right-hand side as
\[ \frac{1}{N^2}\sum_{m=1}^{N-1}\sum_{n=m+1}^N \left((G_m^n)'(0)\right)^\ell = \frac{1}{N^2}\sum_{n=2}^N \sum_{m=1}^{n-1} \left((G_m^n)'(0)\right)^\ell, \]
and applying the triangle inequality yields
\[ \left|\frac{1}{N^2}\sum_{m=1}^N\sum_{n=m+1}^N \left((G_m^n)'(0)\right)^\ell\right| \leq \frac{1}{N^2}\sum_{n=2}^N\left|\sum_{m=1}^{n-1} \left((G_m^n)'(0)\right)^\ell\right| \leq \frac{1}{N}\sum_{n=2}^N\left|\frac{1}{n}\sum_{m=1}^{n-1} \left((G_m^n)'(0)\right)^\ell\right|. \]
The right-hand side is now the Ces\`aro sum of a sequence that, by hypothesis, goes to zero. Therefore condition (\ref{eq:ergodiccondition}) of Theorem \ref{thm:ergodic} is satisfied. This completes the proof of (a).

We now prove (b), which is actually independent of Theorem \ref{thm:ergodic}. Fix integers $m \geq 0$ and $\ell \geq 1$. We have by the Cauchy--Schwarz inequality that
\[ \ip{\frac{1}{N}\sum_{n=1}^N e_\ell\circ \widehat G_n}{e_\ell\circ \widehat G_m} \leq \left\|\frac{1}{N}\sum_{n=1}^N e_\ell\circ \widehat G_n\right\|_2\left\|e_\ell\circ \widehat G_m\right\|_2, \]
and since $|e_\ell\circ \widehat G_m | = 1$ at $m$-almost every point of the unit circle, this becomes
\begin{equation}\label{eq:ineq}
    \ip{\frac{1}{N}\sum_{n=1}^N e_\ell\circ \widehat G_n}{e_\ell\circ \widehat G_m} \leq \left\|\frac{1}{N}\sum_{n=1}^N e_\ell\circ \widehat G_n\right\|_2.
\end{equation}
For $0 \leq m < N$, we have
\[
\ip{\frac{1}{N}\sum_{n=1}^N e_\ell\circ \widehat G_n}{e_\ell\circ \widehat G_m} 
 = \frac{1}{N}\left(\sum_{n=1}^m \ip{e_\ell\circ \widehat G_n}{e_\ell\circ \widehat G_m} + \sum_{n=m+1}^N \ip{e_\ell\circ \widehat G_n}{e_\ell\circ \widehat G_m}\right).
\]
Applying Lemma \ref{lem:cauchy} and plugging everything back into (\ref{eq:ineq}), we obtain
\[ \frac{1}{N}\sum_{n=1}^m \ip{e_\ell\circ \widehat G_n}{e_\ell\circ \widehat G_m} + \frac{1}{N}\sum_{n=m+1}^N \left((G_m^n)'(0)\right)^\ell \leq \left\|\frac{1}{N}\sum_{n=1}^N e_\ell\circ \widehat G_n\right\|_2. \]
If we assume that $\{\widehat G_n\}_{n\in\mathbb{N}}$ is ergodic, the right-hand side now goes to zero as $N\to+\infty$; the first sum on the left-hand side depends only on $m$ and $\ell$, and thus when divided by $N$ goes to zero as $N\to+\infty$. It follows that
\[ \lim_{N\to+\infty} \frac{1}{N}\sum_{n=m+1}^N\left((G_m^n)'(0)\right)^\ell = 0. \]
\end{proof}

We will also need the following observation about Ces\`aro sums.
\begin{lemma}\label{lem:cessums}
Let $\{z_n\}_{n\in\mathbb{N}}$ and $\{w_n\}_{n\in\mathbb{N}}$ be two sequences of complex numbers. Assume that $\sup_n |z_n| < \infty$ and that $\lim_{n \to \infty} w_n = w\in \mathbb{C} \setminus \{0\}$. Then,
\[ \lim_{N\to+\infty} \frac{1}{N}\sum_{n=1}^N z_nw_n = w\cdot\lim_{N\to+\infty} \frac{1}{N}\sum_{n=1}^N z_n. \]
The equality is strong in the sense that either both limits exist and the identity holds, or neither limit exists.
\end{lemma}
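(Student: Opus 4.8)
The plan is to peel off the limiting factor $w$ and show that the leftover contributes nothing to the Cesàro average. Set $M \defeq \sup_n |z_n| < \infty$. For every $N$ one has the elementary identity
\[ \frac{1}{N}\sum_{n=1}^N z_n w_n \;=\; w\cdot\frac{1}{N}\sum_{n=1}^N z_n \;+\; \frac{1}{N}\sum_{n=1}^N z_n (w_n - w). \]
The first term on the right is precisely $w$ times the quantity whose limit appears in the statement, so the whole problem reduces to controlling the second term.

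For that term I would observe that $|z_n(w_n - w)| \le M\,|w_n - w|$, so the sequence $\{z_n(w_n - w)\}_{n}$ tends to $0$ because $w_n \to w$. The classical fact that the Cesàro means of a null sequence again form a null sequence (the usual $\varepsilon/2$ split: pick $n_0$ with $|z_n(w_n-w)| < \varepsilon/2$ for $n > n_0$, and note the first $n_0$ terms contribute $O(1/N)$) then gives
\[ \lim_{N\to+\infty} \frac{1}{N}\sum_{n=1}^N z_n(w_n - w) = 0. \]
Hence the two sequences $a_N \defeq \frac1N\sum_{n=1}^N z_n w_n$ and $b_N \defeq w\cdot\frac1N\sum_{n=1}^N z_n$ satisfy $a_N - b_N \to 0$.

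To finish, I would invoke the trivial remark that two sequences of complex numbers whose difference tends to $0$ either both converge, necessarily to the same limit, or both diverge; applying this to $a_N$ and $b_N$ yields the stated identity and the ``strong'' dichotomy, where the hypothesis $w \neq 0$ is used to translate convergence (or divergence) of $b_N$ into that of $\frac1N\sum_{n=1}^N z_n$. I do not expect a genuine obstacle in this lemma; the only point deserving a word of care is exactly the assumption $w \neq 0$, since if $w = 0$ the left-hand average is identically governed by a null sequence and therefore always converges to $0$, while $\frac1N\sum_{n=1}^N z_n$ may oscillate, so the ``neither limit exists'' alternative would fail.
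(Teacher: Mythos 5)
Your argument is correct and follows exactly the same route as the paper, which simply notes that $\frac{1}{N}\sum_{n=1}^N z_n(w_n - w) \to 0$ and leaves the rest implicit; you have merely filled in the routine details (the boundedness of $\{z_n\}$, the Ces\`aro means of a null sequence, and the role of $w \neq 0$ in the dichotomy). No further comment is needed.
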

\begin{proof}
    The proof easily follows from the observation
    \begin{equation*}
        \lim_{N\to+\infty} \frac{1}{N}\sum_{n=1}^N z_n (w_n -w) = 0.
    \end{equation*}
\end{proof}
We are ready to prove Theorem \ref{thm:easieriff}.
\begin{proof}[Proof of Theorem \ref{thm:easieriff}]
We assume now that $g_n'(0) > 0$ for all $n \geq 1$. To show that (\ref{eq:mod}) implies ergodicity, we will show that it implies the sufficient condition \eqref{eq:suf} given in Theorem \ref{thm:necsuf}(a). First, notice that since now $(G_m^N)'(0) \in (0, 1)$, it suffices to show that the condition \eqref{eq:suf} is satisfied for $\ell = 1$. Fixed $\varepsilon > 0$, we decompose the sum in question as
\[ \frac{1}{N}\sum_{m=1}^{N-1} (G_m^N)'(0) = \frac{1}{N}\sum_{m=1}^{\fl{N(1-\varepsilon)}} \prod_{k=m+1}^N g_k'(0) + \frac{1}{N}\sum_{m=\fl{N(1-\varepsilon)}+1}^{N-1} (G_m^N)'(0). \]
For the first sum on the right-hand side, note that, since $m \leq \fl{N(1 - \varepsilon)}$ and $g_n'(0) \in (0, 1)$, we have
\[ \prod_{k=m+1}^N g_k'(0) \leq \prod_{k=\fl{N(1 - \varepsilon)}+1}^N g_k'(0). \]
For the second sum, note that each $(G_m^N)'(0)$ is less than one, and the sum itself has at most $N - 1 - (\fl{N(1 - \varepsilon)} + 1) \leq \varepsilon N$ terms. Thus,
\[ \frac{1}{N}\sum_{m=1}^{N-1} (G_m^N)'(0) \leq (1 - \varepsilon)\prod_{k=\fl{N(1 -\varepsilon)}+1}^N g_k'(0) +\varepsilon. \]
By assumption, the first term on the right-hand side goes to zero as $N\to+\infty$, and since $\varepsilon > 0$ was arbitrary it follows that
\[ \lim_{N\to+\infty} \frac{1}{N} \sum_{m=1}^{N-1} (G_m^N)'(0) = 0. \]
Ergodicity follows by Theorem \ref{thm:necsuf}(a). To show the necessity of condition (\ref{eq:mod}), we use the characterisation of ergodicity given in Theorem \ref{thm:ergodic}. Assume that there exist constants $\varepsilon > 0$, $c>0$ and a subsequence $\{N_k \}_{k\in\mathbb{N}}$ of positive integers such that
\[ \prod_{j=\fl{N_k(1 - \epsilon)}+1}^{N_k} g_j'(0) \geq c > 0, \quad k\in\mathbb{N}. \]
We will show that condition (\ref{eq:ergodiccondition}) of Theorem \ref{thm:ergodic} fails for $\ell = 1$. Since all the derivatives are positive and $(G_m^n)'(0) \geq (G_m^{N_k})'(0)$ for any $0\leq m < n \leq N_k$, we get
\[ \frac{1}{N_k^2}\sum_{m=1}^{N_k-1}\sum_{n=m+1}^{N_k} (G_m^n)'(0) \geq \frac{1}{N_k^2}\sum_{m=1}^{N_k-1}\sum_{n=m+1}^{N_k} (G_m^{N_k})'(0) = \frac{1}{N_k^2}\sum_{m=1}^{N_k-1} (N_k - m)(G_m^{N_k})'(0)  . \]
Since all terms in the sum are positive, we have
\[ \frac{1}{N_k^2}\sum_{m=1}^{N_k-1}\sum_{n=m+1}^{N_k} (G_m^n)'(0)  \geq \frac{1}{N_k^2}\sum_{m=\fl{N_k(1-\varepsilon)}+1}^{N_k-1} (N_k - m)(G_m^{N_k})'(0). \]
Now, for $ N_k > m > \fl{N_k(1-\epsilon)}$, we once again have 
\[ (G_m^{N_k})'(0) \geq (G_{\fl{N_k(1-\varepsilon)}}^{N_k})'(0) = \prod_{j=\fl{N_k(1 - \varepsilon)}+1}^{N_k} g_j'(0) \geq c > 0,  \]
and therefore
\[ \frac{1}{N_k^2}\sum_{m=1}^{N_k-1}\sum_{n=m+1}^{N_k} (G_m^n)'(0) \geq \frac{c}{N_k^2}\sum_{m=\fl{N_k(1-\varepsilon)}+1}^{N_k-1} (N_k - m). \]
The sum on the right-hand side is of order $N_k^2$. We conclude that
\[ \liminf_{k\to+\infty} \frac{1}{N_k^2}\sum_{m=1}^{N_k-1}\sum_{n=m+1}^{N_k} (G_m^n)'(0) > 0, \]
meaning that condition (\ref{eq:ergodiccondition}) fails. This completes the proof of (i).

To prove (ii), note that $\{ \widehat G_n \}_{n\in\mathbb{N}}$ is mixing if and only if $\{ \widehat G_{n_k} \}_{k\in\mathbb{N}}$ is ergodic for any subsequence of positive integers $\{n_k\}_{k\in\mathbb{N}}$. Since $G_{n_k}$ corresponds to the non-autonomous dynamics of the inner functions $\tilde{g_k} = g_{n_k} \circ \ldots \circ g_{n_{k-1} + 1}$, part (i) yields that $\{ \widehat G_n \}_{n\in\mathbb{N}}$ is mixing if and only if for any $0 < \varepsilon < 1$, we have  
\[
\prod_{k=\fl{ N(1- \varepsilon)}}^N \prod_{j= n_{k-1} + 1}^{n_k} g_j' (0) \to 0,
\]
as $N \to \infty$, for any subsequence of positive integers $\{n_k\}_{k\in\mathbb{N}}$. This last statement is equivalent to \eqref{eq:mod1}. 

To prove (iii), we assume that $\sum (1- |g_n' (0)|) < \infty$ (which, by Theorem \ref{thm:limits}, is equivalent to the sequence $\{g_n\}_{n\in\mathbb{N}}$ not being contracting). In this case, we can assume (by discarding finitely many $g_n$) that $g_n'(0) \neq 0$ for all $n\in\mathbb{N}$. Since the sequence $ \{|G_n'(0)| \}_{n\in\mathbb{N}}$ is decreasing and by assumption is bounded away from $0$, it has a positive limit $c>0$ as $n\to+\infty$. We now assume that $\{\widehat G_n \}_{n\in\mathbb{N}}$ is ergodic and invoke Theorem \ref{thm:necsuf}(b) with $m = 0$, obtaining
\[ \lim_{N\to+\infty} \frac{1}{N}\sum_{n=1}^N (G_n'(0))^\ell = 0 \]
for every $\ell\in\mathbb{N}$. Lemma \ref{lem:cessums} now says that
\[ 0 = \lim_{N\to+\infty} \frac{1}{N}\sum_{n=1}^N |G_n'(0)|^\ell (e^{i\arg G_n'(0)})^\ell = c^{\ell} \cdot\lim_{N\to+\infty} \frac{1}{N}\sum_{n=1}^N e^{i \ell \arg G_n'(0)} , \quad \ell\in\mathbb{N}, \]
and it follows from Weyl's criterion that the sequence $\{e^{i\arg G_n'(0)}\}_{n\in\mathbb{N}}$ is equidistributed in $\partial\DD$.

Conversely, assume that $\{ e^{i\arg G_n'(0)} \}_{n\in\mathbb{N}}$ is equidistributed in $\partial\DD$. Using the chain rule, we rewrite $(G_m^n)'(0)$ as $(G_m^n)'(0) = G_n'(0)\cdot (G_m'(0))^{-1}$, so that the sum in equation \eqref{eq:suf} of Theorem \ref{thm:necsuf}(a) becomes
\[ \frac{1}{N} \sum_{m=1}^N ((G_m^N)'(0))^{\ell} =   \frac{G_N'(0)^{\ell}}{N} \sum_{m=1}^N (G_m'(0))^{-\ell} = \frac{G_N'(0)^{\ell}}{N} \sum_{m=1}^N \frac{1}{|G_m'(0)|^\ell}e^{-i\ell\arg G_m'(0)} , \quad \ell \in \mathbb{N} . \]
By Lemma \ref{lem:cessums}, we get 
\[ \lim_{N\to+\infty} \frac{1}{N}\sum_{m=1}^N ((G_m^N)'(0))^\ell = \lim_{N\to+\infty} \frac{1}{N}\sum_{m=1}^N e^{-i\ell\arg G_m'(0)}, \quad \ell\in\mathbb{N} \]
Since $\{e^{i\arg G_n'(0)}\}_{n\in\mathbb{N}}$ is equidistributed on $\partial\DD$, then by Weyl's criterion this limit is zero, whence ergodicity follows by Theorem \ref{thm:necsuf}(a).
\end{proof}

\begin{proof}[Proof of Corollary \ref{cor:mix}]

The proof mimics the previous argument and we only sketch it. For part (a) we need to show that \eqref{suferg} implies the sufficient condition \eqref{eq:suf} given in Theorem \ref{thm:necsuf}(a). This follows as in the proof of part (a) of Theorem \ref{thm:easieriff} once triangular inequality is applied. Part (b) follows similarly.
\end{proof}

\section{Mixing in the usual sense}\label{sec:usual}
Recall that a sequence $\{T_n\}_{n\in\mathbb{N}}$ of transformations of the measure space $(X, \mathcal{A}, \mu)$ is \textit{mixing} (in the usual sense) if, for all measurable sets $A, B\subset X$,
\[ \mu\left(A\cap T_n^{-1}(B)\right)\to \mu(A)\mu(B) \text{ as $n\to+\infty$.} \]
If $T_n = T^n$ and $\mu$ is finite, this implies that $\{T^n \}$ is ergodic (see e.g. \cite[Proposition 4.1.3]{VO16}). However, if the system is non-autonomous, this implication can fail drastically -- see e.g. \cite{BS01}. Nevertheless, it can be interesting (and useful; see \cite[Theorem 7.4]{BEFRS22}) to study ``classical mixing'' for compositions of inner functions.

In this vein, as previously mentioned, Pommerenke \cite{Pom81} already showed that, if $g_n\colon\DD\to\DD$ are inner functions fixing the origin and the composition $G_n\defeq g_n\circ\cdots\circ g_1$ tends to zero locally uniformly in $\DD$ as $n\to+\infty$, then $\{\widehat G_n\}_{n\in\mathbb{N}}$ is mixing in the usual sense (in fact, Pommerenke showed the stronger fact that $\{\widehat G_n\}_{n\in\mathbb{N}}$ is ``exact in the usual sense''). Here, we give a converse to his result for non-autonomous dynamics. The proof relies on the following consequence of mixing (see \cite[Corollary 7.1.14]{VO16}), whose short proof is included for completeness.
\begin{lemma}\label{lem:acmixing}
Let $(X, \mathcal{A} , \mu)$ be a probability space. Let $f_n\colon X \to X$ be measurable, measure-preserving transformations. Assume that the sequence $F_n\defeq f_n\circ \cdots\circ f_1$ is mixing in the usual sense. Let $\nu$ be a probability measure on $X$ which is absolutely continuous with respect to $\mu$. Then,
\[ \lim_{n \to \infty } \nu\left(F_n^{-1}(B)\right) = \mu(B) \]
for any measurable set $B\subset X$.
\end{lemma}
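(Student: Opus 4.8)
The plan is to reduce the statement to a convergence of integrals and then prove it first for simple densities and pass to the limit by an $L^1$-approximation argument. Since $\nu$ is a probability measure absolutely continuous with respect to $\mu$, the Radon--Nikodym theorem gives a nonnegative $\varphi\in L^1(\mu)$ with $\int_X \varphi\,d\mu = 1$ and $d\nu = \varphi\,d\mu$. For any measurable $B\subset X$ we then have, writing $\one_{F_n^{-1}(B)} = \one_B\circ F_n$,
\[ \nu\big(F_n^{-1}(B)\big) = \int_X \one_{F_n^{-1}(B)}\,d\nu = \int_X (\one_B\circ F_n)\,\varphi\,d\mu, \]
so it suffices to prove that $\int_X (\one_B\circ F_n)\,\varphi\,d\mu \to \mu(B)$ as $n\to\infty$.

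First I would verify this when $\varphi$ is a simple function $\varphi = \sum_{i=1}^k c_i\one_{A_i}$ with $A_i\in\mathcal{A}$. Using $\one_{A_i}\cdot(\one_B\circ F_n) = \one_{A_i\cap F_n^{-1}(B)}$ and linearity,
\[ \int_X (\one_B\circ F_n)\,\varphi\,d\mu = \sum_{i=1}^k c_i\,\mu\big(A_i\cap F_n^{-1}(B)\big) \longrightarrow \sum_{i=1}^k c_i\,\mu(A_i)\mu(B) = \mu(B)\int_X\varphi\,d\mu \]
as $n\to\infty$, where the convergence is the mixing hypothesis applied to each of the finitely many terms. For a general density $\varphi$, given $\epsilon>0$ I would choose a simple function $\psi$ with $\int_X|\varphi-\psi|\,d\mu<\epsilon$ (simple functions are dense in $L^1(\mu)$). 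Since $|\one_B\circ F_n|\le 1$ everywhere, $\big|\int_X(\one_B\circ F_n)(\varphi-\psi)\,d\mu\big|<\epsilon$ for every $n$; the simple-function case gives $\int_X(\one_B\circ F_n)\,\psi\,d\mu\to\mu(B)\int_X\psi\,d\mu$, and $\big|\int_X\psi\,d\mu-1\big|<\epsilon$ together with $\mu(B)\le 1$ shows this limit is within $\epsilon$ of $\mu(B)$. Combining the two estimates yields $\limsup_{n\to\infty}\big|\nu(F_n^{-1}(B))-\mu(B)\big|\le 2\epsilon$, and letting $\epsilon\to 0$ completes the proof.

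There is no genuine obstacle here; this is a standard density argument. The only point that needs (minor) care is that the approximation error $\int_X(\one_B\circ F_n)(\varphi-\psi)\,d\mu$ must be bounded \emph{uniformly in $n$}, which is exactly what the pointwise bound $|\one_B\circ F_n|\le 1$ provides, so that the simple-function convergence — which holds for each fixed $\psi$ — can be leveraged uniformly.
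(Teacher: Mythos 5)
Your proposal is correct and follows essentially the same route as the paper: write $\nu(F_n^{-1}(B))=\int_X(\one_B\circ F_n)\varphi\,d\mu$ via Radon--Nikodym, apply the mixing hypothesis to simple approximants of $\varphi$, and pass to the limit using density of simple functions in $L^1(\mu)$ together with the uniform bound $|\one_B\circ F_n|\le 1$. The paper compresses this into one sentence, while you spell out the approximation estimate explicitly; there is no substantive difference.
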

\begin{proof}
Let $\varphi$ denote the Radon-Nykodim derivative of $\nu$ relative to $\mu$, and let $\one_B$ denote the indicator function of the measurable set $B \subset X$. Since $\varphi$ can be approximated in $L^1 (\mu)$ by linear combinations of characteristic functions, the assumption that $\{F_n\}_{n\in\mathbb{N}}$ is mixing gives that 
\[ \int_X (\one_B\circ F_n)\cdot\varphi\,d\mu\to \int_X \one_B\,d\mu\int_X \varphi\,d\mu  \]
as $n\to+\infty$. The left-hand side is, by the Radon-Nykodim Theorem, equal to $\nu(F_n^{-1}(B))$, while the right-hand side is equal to $\mu(B)$ since $\nu$ is a probability measure.
\end{proof}

\begin{thm}\label{thm:mixusual}
Let $g_n\colon\DD\to\DD$ be inner functions fixing the origin, and let $G_n\defeq g_n\circ\cdots\circ g_1$, $n \geq 1$. Then, the sequence $\{\widehat G_n\}_{n\in\mathbb{N}}$ is mixing in the usual sense if and only if $\{g_n\}_{n\in\mathbb{N}}$ is contracting.
\end{thm}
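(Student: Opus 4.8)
The plan is to prove the two implications separately. One of them is essentially free: ``$\{g_n\}_{n\in\mathbb{N}}$ contracting $\Rightarrow$ $\{\widehat G_n\}_{n\in\mathbb{N}}$ mixing in the usual sense'' is precisely Pommerenke's theorem \cite{Pom81} recalled above, so I only need to address the converse. Thus I assume that $\{\widehat G_n\}_{n\in\mathbb{N}}$ is mixing in the usual sense and aim to show that $G_n \to 0$ locally uniformly on $\DD$, which by definition is what it means for $\{g_n\}_{n\in\mathbb{N}}$ to be contracting.

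The core idea is to feed \emph{harmonic measure} into Lemma \ref{lem:acmixing}. Fix $z_0 \in \DD$ and set $\nu = \omega_{z_0}$; this is a probability measure on $\partial\DD$ that is absolutely continuous with respect to $m$, its density being the (bounded) Poisson kernel at $z_0$. Applying Lemma \ref{lem:acmixing} with $\mu = m$, $f_n = \widehat g_n$ and $F_n = \widehat G_n$ gives $\omega_{z_0}\big(\widehat G_n^{-1}(B)\big) \to m(B)$ for every measurable $B \subset \partial\DD$. Now Lowner's Lemma (Lemma \ref{lem:innerharmonic}), applied to the inner function $G_n$ at the point $z_0$, identifies $\omega_{z_0}\big(\widehat G_n^{-1}(B)\big) = \omega_{G_n(z_0)}(B)$, so $\omega_{G_n(z_0)}(B) \to m(B)$ setwise. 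Since all these measures are probability measures, setwise convergence forces $\int_{\partial\DD} \varphi \, d\omega_{G_n(z_0)} \to \int_{\partial\DD} \varphi\, dm$ for every bounded measurable $\varphi$ (approximate $\varphi$ uniformly by simple functions). Testing against $e_1$ and using that $\int_{\partial\DD} e_1 \, d\omega_w = w$ for $w\in\DD$ (the Poisson extension of $\xi\mapsto\xi$ is the identity map of $\DD$), while $\int_{\partial\DD} e_1\, dm = 0$, I conclude that $G_n(z_0) \to 0$.

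Since $z_0\in\DD$ was arbitrary, $G_n \to 0$ pointwise on $\DD$. The family $\{G_n\}_{n\in\mathbb{N}}$ is uniformly bounded, hence normal, so the only possible locally uniform subsequential limit is the constant $0$; therefore $G_n \to 0$ locally uniformly on $\DD$, i.e. $\{g_n\}_{n\in\mathbb{N}}$ is contracting, completing the proof. (Alternatively, one may invoke Theorem \ref{thm:limits}: if $\{g_n\}_{n\in\mathbb{N}}$ were not contracting, a subsequence $G_{n_k}$ would converge locally uniformly to a non-constant inner function $H$ fixing the origin, and choosing $z_0$ with $H(z_0)\neq 0$ would contradict $G_{n_k}(z_0)\to 0$.)

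The step I expect to be the only delicate one is the bridge from ``mixing in the usual sense'' — a statement purely about measures of sets — to the quantitative conclusion $G_n(z_0)\to 0$; everything else (density of simple functions, normality of $\{G_n\}_{n\in\mathbb{N}}$, the elementary identity $\int e_1\, d\omega_w = w$) is routine. The trick that makes the bridge work is the specific choice of the absolutely continuous measure in Lemma \ref{lem:acmixing}: taking $\nu = \omega_{z_0}$ is exactly what lets Lowner's Lemma turn $\omega_{z_0}\circ\widehat G_n^{-1}$ back into the harmonic measure $\omega_{G_n(z_0)}$, which still records the position of $G_n(z_0)$ inside $\DD$.
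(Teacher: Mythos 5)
Your proof is correct and follows essentially the same route as the paper: the key step in both is to feed the harmonic measure $\omega_{z_0}$ into Lemma \ref{lem:acmixing} and use Lowner's Lemma to identify $\omega_{z_0}\circ \widehat G_n^{-1}$ with $\omega_{G_n(z_0)}$. The only (harmless) difference is in how the conclusion is extracted: you test the setwise limit against $e_1$ to get $G_n(z_0)\to 0$ directly, whereas the paper argues by contradiction, using Theorem \ref{thm:limits} to produce a non-constant limit function $G$ and a point $z$ with $G(z)\neq 0$, so that $\omega_{G(z)}\neq m$ --- the alternative you yourself note in parentheses.
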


\begin{proof}[Proof of Theorem \ref{thm:mixusual}]
As mentioned before, Pommerenke (\cite{Pom81}) proved that $\{ \widehat G_n\}_{n\in\mathbb{N}}$ is mixing if $G_n$ tend to $0$ uniformly on compacts of $\DD$. Conversely, assume that $\{\widehat G_n \}_{n\in\mathbb{N}}$ is mixing but at the same time $G_n\to G$ pointwise in $\DD$, where $G$ is a non-constant inner function. Now, take $z\in\DD\setminus G^{-1}(0)$. Since the harmonic measure $\omega_z$ is absolutely continuous with respect to Lebesgue measure, Lemma \ref{lem:acmixing} gives that
\[ \lim_{n \to \infty } \omega_z\left(\widehat G_n^{-1}(B)\right) =  m(B) \]
for every measurable set $B\subset\partial\DD$. However, by Lemma \ref{lem:innerharmonic}, we have 
\[ \omega_z\left(\widehat G_n^{-1}(B)\right) = \omega_{G_n(z)}(B), \]
for any measurable set $B\subset\partial\DD$. Since $G_n(z)\to G(z)$ we have $\omega_z\left(\widehat G_n^{-1}(B)\right)\to \omega_{G(z)}(B)$ as $n \to \infty$, for any measurable set $B \subset \partial \DD$. Since $G(z) \neq 0$, there exists a measurable set $B \subset \partial \DD$ with $w_{G(z)} (B) \neq m(B)$ and we obtain a contradiction, concluding the proof.
\end{proof}

\section{Examples and counterexamples}\label{sec:examples}
In this section, we apply the various necessary and sufficient conditions obtained above to illustrate what ergodic and non-ergodic compositions of inner functions may look like. We start with the example promised in Section \ref{sec:intro} of a sequence that is contracting but not ergodic. This example also serves to show that the necessary condition given in Theorem \ref{thm:necsuf}(b) cannot be sufficient. 
\begin{prop}    
\label{thm:counterexample}
There exists a sequence $g_n\colon\DD\to\DD$ of inner functions fixing the origin such that the sequence $\{G_n\}_{n\in\mathbb{N}}$ generated by $G_n\defeq g_n\circ\cdots\circ g_1$ satisfies the following conditions:
\begin{enumerate}
    \item $G_n\to 0$ locally uniformly in $\DD$;
    \item $\{G_n\}_{n\in\mathbb{N}}$ satisfies the necessary condition \eqref{eq:nec} in Theorem \ref{thm:necsuf}(b);
    \item $\{\widehat G_n\}_{n\in\mathbb{N}}$ is not ergodic.
\end{enumerate}
\end{prop}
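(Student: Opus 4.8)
The plan is to construct the sequence $\{g_n\}$ explicitly as a sequence of multiplication-type inner functions, i.e. scalar multiples $g_n(z) = \lambda_n z$ with $|\lambda_n| < 1$ on long blocks, alternating the argument so that the Cesàro averages in \eqref{eq:nec} vanish but the Cesàro--type double average in \eqref{eq:ergodiccondition} does not. Concretely, since for $g_n(z) = \lambda_n z$ one has $(G_m^n)'(0) = \prod_{k=m+1}^n \lambda_k$, I would design $\{\lambda_n\}$ in consecutive blocks $I_1, I_2, \ldots$ of rapidly increasing length, where on block $I_j$ we put $\lambda_n = r_j e^{i\alpha_j}$ for suitable $r_j \in (0,1)$ close to $1$ and angles $\alpha_j$ chosen to kill certain partial sums but maintain a definite real part overall. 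The key will be to choose the block lengths $|I_j|$ growing so fast (say, $|I_{j+1}| \gg |I_1| + \cdots + |I_j|$) that the behaviour of the averages up to $N \approx$ (end of block $I_j$) is dominated by the contribution of block $I_j$ alone.

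The main steps, in order, are as follows. First, I verify condition (1): since on each block $r_j < 1$ and the blocks are long, the product $\prod_{k \leq n} |\lambda_k| \to 0$, so $G_n \to 0$ locally uniformly by Theorem~\ref{thm:limits}(a) (equivalently directly, since $|G_n(z)| \leq |z| \prod |\lambda_k|$). Second, I check the necessary condition \eqref{eq:nec}: for fixed $m$ and $\ell$, the inner sum $\frac{1}{N}\sum_{n=m+1}^N ((G_m^n)'(0))^\ell = \frac{1}{N}\sum_{n=m+1}^N (\prod_{k=m+1}^n \lambda_k)^\ell$; because within any single block the partial products $\prod_{k=m+1}^n \lambda_k$ shrink geometrically (as $r_j^{n-m}$), the whole sum $\sum_{n > m}|\prod_{k=m+1}^n \lambda_k|^\ell$ is \emph{bounded} independently of $N$ provided $r_j$ stays bounded away from $1$ — so dividing by $N$ gives $0$. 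This is the point where I must be slightly careful: if I want $r_j \to 1$ (to make the blocks "barely contracting"), the bound on the tail sum is no longer uniform, so I would instead fix $r_j = r \in (0,1)$ constant, or let $r_j \to 1$ slowly enough that $\sum_n |\prod_{k=m+1}^n \lambda_k|$ still grows sublinearly in $N$; fixing $r_j \equiv r$ is cleanest and suffices.

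Third — the heart of the construction — I show \eqref{eq:ergodiccondition} fails, say for $\ell = 1$. With $\lambda_k = r e^{i\alpha_k}$, we have $(G_m^n)'(0) = r^{n-m} e^{i(\alpha_{m+1}+\cdots+\alpha_n)}$. If I simply take \emph{all} $\alpha_k = 0$, then $\Re \frac{1}{N^2}\sum_{m<n} r^{n-m} = \frac{1}{N^2}\sum_{m=1}^{N-1}\frac{r - r^{N-m+1}}{1-r} \sim \frac{1}{N}\cdot\frac{r}{1-r} \to 0$, which is the wrong direction — so constant angle $0$ with constant $r$ is actually ergodic. This tells me the obstruction to ergodicity in the positive-derivative regime must come from $r_j \to 1$ on blocks, as in Theorem~\ref{thm:easieriff}(i): I need $\prod_{k = \lfloor N(1-\varepsilon)\rfloor}^N g_k'(0)$ to \emph{not} tend to $0$ for some $\varepsilon$. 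So the right construction is $g_n(z) = r_n z$ with $r_n \in (0,1)$, $r_n \to 1$, arranged so that $\sum (1 - r_n) = \infty$ (giving (1)) but on a subsequence $N_k$ the block $[\lfloor N_k(1-\varepsilon)\rfloor, N_k]$ has $\sum_{\text{block}} (1-r_n)$ bounded, hence $\prod_{\text{block}} r_n \geq c > 0$; then the proof of Theorem~\ref{thm:easieriff}(i) shows \eqref{eq:ergodiccondition} fails. For (2), with all derivatives positive and $r_n \to 1$, the tail sum $\sum_{n>m} \prod_{k=m+1}^n r_k$ could grow linearly, so I must double-check: actually for fixed $m$, once $n$ is large the partial products $\prod_{k=m+1}^n r_k \to 0$ (by contractibility at the fixed starting index $m$, from Theorem~\ref{thm:limits}), and in fact $\sum_n \prod_{k=m+1}^n r_k < \infty$ will hold as long as the products decay fast enough; by choosing the "contracting spurts" between the preserved blocks to be sufficiently strong, I can force $\sum_n \prod_{k=m+1}^n r_k < \infty$ for every fixed $m$, so \eqref{eq:nec} holds. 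The main obstacle is thus balancing two competing demands on $\{r_n\}$: strong enough contraction between blocks to guarantee \eqref{eq:nec} (a convergent-tail condition for every fixed base point $m$), yet mild enough contraction \emph{within} a subsequence of blocks to guarantee $\prod_{\text{block}} r_n \geq c$ and hence non-ergodicity. A concrete choice — e.g. on the $k$-th block of length $L_k = 2^{2^k}$ set $r_n = 1 - 1/L_k^2$ (so the block product is $\approx e^{-1/L_k} \to 1$, yet $\sum_{\text{block}}(1-r_n) = 1/L_k$, summable over blocks? no — I need it non-summable, so instead take block length $L_k$ and within it $r_n = 1 - 1/(k L_k)$ giving block-product $\to 1$ along... ) — will need to be pinned down, and making these three estimates simultaneously consistent is where the real work lies; I expect to define the $r_n$ via an explicit piecewise formula and then verify each of (1)--(3) by direct summation.
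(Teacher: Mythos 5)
There is a genuine gap, on two counts. First, your building blocks are not inner functions: a map $g_n(z)=\lambda_n z$ (or $r_n z$) with $|\lambda_n|<1$ has boundary values of modulus $|\lambda_n|<1$ everywhere, so it is not inner, and the proposition (as well as Lemma \ref{lem:cauchy}, Theorem \ref{thm:ergodic} and Theorem \ref{thm:necsuf}, which you invoke) requires inner functions fixing the origin. The fix is standard and is what the paper does: realise a prescribed derivative $g_n'(0)=a_n\in(0,1)$ by the degree-two Blaschke product $g_n(z)=z\,\frac{z+a_n}{1+a_n z}$; since all three conditions in the proposition depend only on the numbers $(G_m^n)'(0)=\prod_{k=m+1}^n a_k$, your numerical analysis then transfers verbatim. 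Second, the construction is never actually completed. You correctly reduce the problem (via the necessity direction of Theorem \ref{thm:easieriff}(i)) to finding $a_n\in(0,1)$ with $\sum(1-a_n)=\infty$ while $\prod_{k=\fl{N(1-\varepsilon)}}^{N}a_k$ stays bounded away from $0$ along a subsequence, but the closing lines cycle through candidate block formulas without settling on one that works, and you explicitly defer ``the real work'' of making the estimates consistent. No sequence is exhibited and verified, so conditions (1)--(3) are not established.

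Two further remarks that would have simplified your route. The ``competing demands'' you worry about for condition (2) are illusory: \eqref{eq:nec} is a Ces\`aro average, so it suffices that $(G_m^n)'(0)\to 0$ as $n\to\infty$ for each fixed $m$, and this is automatic from contraction (condition (1)); no summability of tails is needed, hence no balancing act. And the block structure is unnecessary: the single monotone choice $a_n=n/(n+1)$ already does everything, since $\sum(1-a_n)=\sum\frac{1}{n+1}=\infty$ gives (1) and hence (2), while $(G_m^n)'(0)=\frac{m+1}{n+1}\ge\frac12$ for $N/2\le m<n\le N$ forces the double sum in \eqref{eq:ergodiccondition} (with $\ell=1$) to stay bounded below, giving (3). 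This is exactly the paper's proof.
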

\begin{proof}
Let $g_n\colon\DD\to\DD$ be the Blaschke product of degree $2$ given by
\[ g_n(z) = z\cdot\frac{z + a_n}{1 + a_nz}, \]
with $a_n = n/(n+1)$. An immediate calculation shows that $g_n'(0) = a_n$, and so
\[ \sum_{n\geq 1} (1 - |g_n'(0)|) = \sum_{n\geq 1} \frac{1}{n+1} = \infty, \]
whence $G_n = g_n\circ\cdots \circ g_1$ converges locally uniformly to zero by Theorem \ref{thm:limits}. Furthermore, by the chain rule, we have
\begin{equation}
\label{eq:deriv}
    (G_m^n)'(0) = \prod_{k=m+1}^n \frac{k}{k+1} = \frac{m+1}{n+1},
\end{equation}
and so for any fixed natural numbers $\ell$ and $m$, the sequence $((G_m^n)'(0))^\ell$ goes to zero as $n\to+\infty$. The necessary condition \eqref{eq:nec} in Theorem \ref{thm:necsuf}(b) is now satisfied, since it becomes the Ces\`aro sum of a sequence going to zero. We finally show that $\{ \widehat G_n\}$ is not ergodic. Note that $m+1 \geq (n+1) /2$ if $N/2 \leq m < n \leq N$. Hence \eqref{eq:deriv} gives 
\[
\frac{1}{N^2} \sum_{m=1}^{N-1} \sum_{n=m+1}^N  (G_m^n)'(0) \geq \frac{1}{2N^2} \sum_{m=\fl{N/2}}^{N-1} (N-m),
\]
which does not tend to $0$ as $N \to \infty$. Hence condition \eqref{eq:ergodiccondition} for $\ell = 1$ in Theorem \ref{thm:ergodic} is not satisfied and consequently $\{ \widehat G_n\}_{n\in\mathbb{N}}$ is not ergodic.  
\end{proof}

Next, we use Theorem \ref{thm:easieriff} to provide several explicit examples of mixing and ergodic compositions of inner functions.
\begin{cor}\label{cor:examples}
Let $g_n\colon\DD\to\DD$ be inner functions fixing the origin and let $G_n \defeq g_n\circ\cdots\circ g_1$, $n \geq 1$. 
\begin{enumerate}[(i)]
\item Assume $\sum (1- |g_n' (0)|) = \infty$. Then $\{\widehat G_n\}_{n\in\mathbb{N}}$ has a mixing subsequence.

\item If, furthermore, there exist constants $0< \lambda < 1$, $0<c<1$ and $M_0 >0$ such that for any $a,b \in \mathbb{N}$ with $b-a >M_0$ one has 
\begin{equation}
    \label{eq:card}
\# \{n \in [a,b] : |g_n' (0)| \leq \lambda \} \geq c (b-a) , 
\end{equation}
then the sequence $\{\widehat G_n \}_{n\in\mathbb{N}}$ is mixing.
    \item Assume $\sum (1- |g_n' (0)|) < \infty$. Then:
    \begin{enumerate}[(a)]
        \item If $\arg g_n'(0)\to\theta$ as $n\to+\infty$ for some $\theta\in\mathbb{R}\setminus\mathbb{Q}$, then the sequence $\{\widehat G_n\}_{n\in\mathbb{N}}$ is ergodic.
        \item If the arguments $\theta_n = \arg g_n'(0)$ are independently and identically distributed according to some non-atomic distribution on $\partial\DD$, then $\{\widehat G_n\}_{n\in\mathbb{N}}$ is ergodic with probability 1.
    \end{enumerate}
\end{enumerate}
\end{cor}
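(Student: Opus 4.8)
The plan is to reduce each of the three parts to results already established. Part (i) is immediate: by Theorem~\ref{thm:limits}(a) the hypothesis $\sum_n(1-|g_n'(0)|)=\infty$ says precisely that $\{g_n\}_{n\in\mathbb{N}}$ is contracting, and Corollary~\ref{cor:mix1} then supplies a mixing subsequence of $\{\widehat G_n\}_{n\in\mathbb{N}}$. For part (ii) I would verify the sufficient condition \eqref{sufmix} of Corollary~\ref{cor:mix}(b). Given $\epsilon\in(0,1)$ and $N>M>M_0$, apply the counting hypothesis \eqref{eq:card} with $a=N-M$ and $b=N$: at least $cM$ of the factors $g_k'(0)$ with $k\in[N-M,N]$ have modulus $\le\lambda$, and since every factor has modulus $\le1$ this forces $\prod_{k=N-M}^N|g_k'(0)|\le\lambda^{cM}$, which drops below $\epsilon$ once $M$ exceeds a threshold depending only on $\epsilon$, $c$, $\lambda$. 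Hence \eqref{sufmix} holds and Corollary~\ref{cor:mix}(b) gives that $\{\widehat G_n\}_{n\in\mathbb{N}}$ is mixing. Neither of these steps presents any real difficulty.

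For part (iii), both cases assume $\sum_n(1-|g_n'(0)|)<\infty$, so $\{g_n\}_{n\in\mathbb{N}}$ is not contracting, and Theorem~\ref{thm:easieriff}(iii) reduces the problem to showing that the sequence $\{e^{i\arg G_n'(0)}\}_{n\in\mathbb{N}}$ is equidistributed on $\partial\DD$. Since $|g_n'(0)|\to1$, only finitely many $g_n'(0)$ can vanish; discarding those finitely many $g_n$ changes neither the hypotheses nor the conclusion (precomposing all $\widehat G_n$ with a fixed measure-preserving map is an $L^2$-isometry, and a fixed rotation together with a finite shift of the index does not affect equidistribution), so we may assume $g_n'(0)\ne0$ for all $n$, and then the chain rule gives $e^{i\arg G_n'(0)}=\prod_{k=1}^n e^{i\arg g_k'(0)}$. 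By Weyl's criterion it then suffices to prove, for each fixed $\ell\in\mathbb{N}$, that $\frac{1}{N}\sum_{n=1}^N\bigl(e^{i\arg G_n'(0)}\bigr)^\ell\to0$.

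In case (a), I would write $e^{i\arg g_k'(0)}=e^{i\theta}e^{i\delta_k}$ with $\delta_k\to0$ (possible since $\arg g_k'(0)\to\theta$), so that $\bigl(e^{i\arg G_n'(0)}\bigr)^\ell=e^{i\ell n\theta}\,w_n$ with $w_n:=\exp\bigl(i\ell\sum_{k\le n}\delta_k\bigr)$ of unit modulus and $|w_{n+1}-w_n|=|e^{i\ell\delta_{n+1}}-1|\to0$. As $\theta$ is irrational and $\ell\ge1$, the partial sums of $e^{i\ell n\theta}$ are uniformly bounded, so summation by parts bounds $\bigl|\sum_{n\le N}e^{i\ell n\theta}w_n\bigr|$ by a constant multiple of $1+\sum_{n<N}|w_{n+1}-w_n|$, which is $o(N)$. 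Dividing by $N$ yields the claimed limit, hence ergodicity via Theorem~\ref{thm:easieriff}(iii).

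In case (b), the $\theta_n:=\arg g_n'(0)$ are i.i.d.\ with common non-atomic law $\nu$, and $\bigl(e^{i\arg G_n'(0)}\bigr)^\ell=\prod_{k\le n}X_k=:P_n$ with $X_k:=e^{i\ell\theta_k}$ i.i.d.\ of unit modulus and mean $\widehat\nu(\ell)$. Non-atomicity forces $r:=|\widehat\nu(\ell)|<1$, since $|\widehat\nu(\ell)|=1$ would make $e^{i\ell\theta_1}$ almost surely constant, i.e.\ $\nu$ supported on a finite set. Using $|X_k|=1$ one computes $E[P_n\overline{P_m}]=\widehat\nu(\ell)^{n-m}$ for $m\le n$, whence a second-moment estimate gives $E\bigl|\frac{1}{N}\sum_{n\le N}P_n\bigr|^2\le\frac{1}{N^2}\sum_{m,n\le N}r^{|n-m|}=O(1/N)$. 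I would then restrict to the subsequence $N_j=j^2$, along which these bounds are summable, to get $\frac{1}{N_j}\sum_{n\le N_j}P_n\to0$ almost surely, and interpolate to arbitrary $N$ using the trivial bound $|P_n|=1$ (the interpolation error being $O\bigl((N_{j+1}-N_j)/N_j\bigr)=O(1/j)$), concluding $\frac{1}{N}\sum_{n\le N}P_n\to0$ almost surely for each $\ell$. Intersecting over $\ell\in\mathbb{N}$ and applying Weyl's criterion together with Theorem~\ref{thm:easieriff}(iii), we obtain ergodicity with probability $1$. I expect this last step to be the main obstacle: the second-moment estimate only delivers Cesàro convergence in $L^2$ (equivalently in probability), so upgrading it to almost sure convergence simultaneously for every $\ell$ requires the Borel--Cantelli argument along $N_j=j^2$ supplemented by $|P_n|=1$; the rest is bookkeeping on top of the earlier results.
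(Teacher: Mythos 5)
Your parts (ii) and (iii) are correct, but part (i) as written is circular. You obtain the mixing subsequence by invoking Corollary~\ref{cor:mix1}; in the paper, however, the forward implication of Corollary~\ref{cor:mix1} (contracting implies a mixing subsequence) is itself \emph{proved} by appealing to part (i) of the present corollary, so it is not available here. The repair is short and is exactly what the paper does: since $\sum_n(1-|g_n'(0)|)=\infty$, choose an increasing sequence $\{N_k\}_{k\in\mathbb{N}}$ with $\prod_{j=N_k+1}^{N_{k+1}}|g_j'(0)|\le 1/2$ for every $k$; then $\prod_{k=M}^{N}\prod_{j=N_k+1}^{N_{k+1}}|g_j'(0)|\le 2^{-(N-M)}$, so condition \eqref{sufmix} holds for the reindexed sequence $\tilde g_k=g_{N_{k+1}}\circ\cdots\circ g_{N_k+1}$ and Corollary~\ref{cor:mix}(b) makes $\{\widehat G_{N_k}\}_{k\in\mathbb{N}}$ mixing. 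Your part (ii) already contains this mechanism, so nothing new is needed.

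The rest is sound. Part (ii) coincides with the paper's argument: \eqref{eq:card} with $a=N-M$, $b=N$ gives $\prod_{k=N-M}^{N}|g_k'(0)|\le\lambda^{cM}$, and Corollary~\ref{cor:mix}(b) applies. For part (iii) you correctly reduce, via Theorem~\ref{thm:easieriff}(iii) and Weyl's criterion, to equidistribution of $\{e^{i\arg G_n'(0)}\}_{n\in\mathbb{N}}$; where the paper then simply cites the van der Corput--Fej\'er theorem (\cite[Theorem 3.3]{KN74}) for (a) and Robbins \cite{Rob53} for (b), you prove the two special cases from scratch. Your summation-by-parts argument for (a), and your second-moment plus Borel--Cantelli argument along $N_j=j^2$ for (b) (using that non-atomicity of $\nu$ forces $|\widehat\nu(\ell)|<1$ for every $\ell\ge1$), both check out; this makes the proof self-contained at the cost of some length. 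One caveat in (a), which your write-up shares with the paper's statement: the bound on the partial sums of $e^{i\ell n\theta}$ requires $e^{i\ell\theta}\ne 1$ for every $\ell\ge1$, i.e.\ $\theta\notin 2\pi\mathbb{Q}$. This is the intended reading of ``irrational'' (it is also what the introductory remark on rotations needs), but it is worth making explicit, since $\theta\in\mathbb{R}\setminus\mathbb{Q}$ taken literally admits $\theta=2\pi/3$, for which the partial sums with $\ell=3$ are unbounded and the conclusion fails.
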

\begin{proof}


Assume $\sum (1 - |g_n' (0)|) = \infty$. Then there exists an increasing sequence $\{N_k \}_{k\in\mathbb{N}}$ of positive integers such that 
\[
\prod_{j= N_k +1 }^{N_{k+1}} |g_j'(0)| \leq 1/2, \quad k=1,2,\ldots . 
\]
Since
\[
\prod_{k=M}^N \prod_{j=N_k +1 }^{N_{k+1}} |g_j'(0)| \leq 1/2^{N-M}, 
\]
part (b) of Corollary \ref{cor:mix} gives that the subsequence $\{\widehat G_{N_k}\}_{k\in\mathbb{N}}$ is mixing.  

Assume now that condition \eqref{eq:card} holds. Then 
\[
\prod_{j=M}^N |g_j' (0)| \leq \lambda^{c(N-M)}
\]
if $N-M \geq M_0$, whence by part (b) of Corollary \ref{cor:mix} the sequence $\{\widehat G_n \}_{n\in\mathbb{N}}$ is mixing.

Now, assume that $G_n\not\to 0$ locally uniformly on $\DD$ (which, recall, is equivalent to $\sum_{n\geq 1}(1 - |g_n'(0)|)<\infty$ by Theorem \ref{thm:limits}), whence by Theorem \ref{thm:easieriff}(iii) the sequence $\{\widehat G_n\}_{n\in\mathbb{N}}$ is ergodic if and only if $\{e^{i\arg G_n'(0)}\}_{n\in\mathbb{N}}$ is equidistributed on $\partial\DD$. Thus, we only need to check that the conditions outlined in (a) and (b) imply equidistribution. That (a) does is an immediate consequence of a theorem by van der Corput (see \cite[Theorem 3.3]{KN74}). On the other hand, (b) implies equidistribution almost surely by a result of Robbins \cite[Theorem 2]{Rob53}, which says that sums of independent and identically distributed random variables drawn from a non-atomic distribution are equidistributed with probability one.
\end{proof}

We can now prove Corollary \ref{cor:mix1}.

\begin{proof}[Proof of Corollary \ref{cor:mix1}]
That contracting implies having a mixing subsequence is part (a) of the previous result, and the converse follows from part (iii) of Theorem \ref{thm:easieriff}.
\end{proof}

\section{Recurrence}\label{sec:rec}
In this section, we prove Theorem \ref{prop:rec}.
\begin{proof}[Proof of Theorem \ref{prop:rec}]
We can assume without loss of generality that the sequence $\{T_n\}_{n\in\mathbb{N}}$ is ergodic.

To show recurrence, let $A\subset X$ be a measurable set with $\mu(A) > 0$. If $\one_A$ is the characteristic function of $A$, then by ergodicity the functions
\[ x\mapsto \frac{1}{N}\sum_{n=1}^N \one_A\circ T_n(x) \]
converge to the constant function $\mu(A)$ in $L^2(\mu)$. Since a sequence converging in $L^2(\mu)$ admits a subsequence converging $\mu$-almost everywhere, recurrence follows.

Next, assume that $\mathrm{supp}(\mu) = X$, and let $\{U_k\}_{k\in\mathbb{N}}$ be a countable basis for the topology of $X$. For $k \in \mathbb{N}$, denote by $\tilde U_k$ the set of points $x\in X$ such that $T_n(x)\in U_k$ for only finitely many $n \geq 1$. Then, clearly,
\[ \frac{1}{N} \sum_{n=1}^N \one_{U_k}\circ T_n(x) \to 0\text{ as $N\to+\infty$} \]
for all $x\in \tilde U_k$. Hence, no subsequence of the time averages at $x \in \tilde U_k$ converge to $\mu(U_k) > 0$, and so (by ergodicity) $\tilde U_k$ must have measure zero. The set
\[ \tilde U = \bigcup_{k\in\mathbb{N}} \tilde U_k \]
also has measure zero, and it is clear that any point $x\in X\setminus \tilde U$ has a dense orbit.
\end{proof}


\newcommand{\etalchar}[1]{$^{#1}$}
\providecommand{\bysame}{\leavevmode\hbox to3em{\hrulefill}\thinspace}
\providecommand{\MR}{\relax\ifhmode\unskip\space\fi MR }
\providecommand{\MRhref}[2]{%
  \href{http://www.ams.org/mathscinet-getitem?mr=#1}{#2}
}
\providecommand{\href}[2]{#2}

\end{document}